\documentclass[11pt
 ]{elsarticle}
\usepackage{amsmath}
\usepackage{amssymb}
\usepackage{amscd}
\usepackage{amsfonts}
\usepackage{amsthm}
\usepackage{graphicx}
\usepackage{placeins}
\usepackage{color}
\usepackage{float}
\usepackage{caption}
\usepackage[colorlinks, linkcolor=red, citecolor=green,filecolor=magenta,urlcolor=cyan]{hyperref}
\newfont{\Blackboard}{msbm10 scaled 1200}

\newfont{\roma}{cmr10 scaled 1200}

\newtheorem{theorem}{{}\hskip\parindent Theorem}[section]
\newtheorem{lemma}{{}\hskip\parindent Lemma}[section]
\newtheorem{proposition}{{}\hskip\parindent Proposition}[section]
\newtheorem{example}{{}\hskip\parindent Example}[section]

\newtheorem{definition}{{}\hskip\parindent Definition}[section]
\newtheorem{remark}{{}\hskip\parindent Remark}[section]

\def\dfrac{\displaystyle\frac}

\def\be{\begin{equation}}
\def\ee{\end{equation}}
\def\beq{\arraycolsep=1.5pt\begin{eqnarray}}
\def\eeq{\end{eqnarray}}

\journal{...}

\begin{document}
\begin{frontmatter}

\title{ A Palais--Smale type condition for optimal control problems}

\author[Coron]{Jean-Michel Coron}
\address[Coron]{Sorbonne Universit\'e, CNRS, Universit\'e Paris-Cit\'e, Laboratoire Jacques-Louis Lions, LJLL, INRIA \'equipe CAGE, Paris, 75005, France. E-mail
address: jean.m.coron@gmail.com}

\author[Deng]{Li Deng}
\address[Deng]{School of Mathematics,   Southwest Jiaotong University, Chengdu 611756, China. E-mail
address: dengli@swjtu.edu.cn}

\author[Zhang]{Xu Zhang}
\address[Zhang]{School of Mathematics and New Cornerstone Science Laboratory,
Sichuan University, Chengdu, 610064, China. E-mail
address: zhang\_xu@scu.edu.cn.}

\begin{abstract}
We consider an optimal control problem with a general control constraint and a fixed initial state, for which convexity assumptions are absent. To establish the existence of optimal pairs, we introduce a control version of the Palais--Smale-type condition  adapted from critical point theory and prove that optimal pairs exist under this condition (see Theorem~\ref{peo}). Subsequently, we verify that a specific class of optimal control problems satisfies our Palais--Smale-type condition (see Proposition~\ref{scspro}). Furthermore, as an application, we present an example (Example~\ref{ex31}) and show that, to the best of our knowledge, no existing result can be applied to establish the existence of an optimal pair in this case (see Section~\ref{srse31}).   Example~\ref{evpspmp} shows that finitely many Pontryagin maximum principle
candidates do not guarantee the existence of an optimal control. Finally, Example~\ref{edsps} demonstrates that the proposed Palais--Smale-type condition is sufficient, but not necessary, for the existence of optimal controls.
\end{abstract}

\begin{keyword} Optimal control;
Palais--Smale-type condition; Existence of optimal pairs
\MSC[2020]49J15\sep  49J45
\end{keyword}

\end{frontmatter}

\setcounter{equation}{0}

\section{Introduction}
\def\theequation{1.\arabic{equation}}
\hskip\parindent

In optimal control problems, the proof of existence of optimal pairs usually depends on certain convexity conditions. If the problem is not convex, a standard approach is to introduce an associated relaxed problem, prove the existence of optimal pairs for this relaxed problem, and subsequently use them to construct optimal pairs for the original problem.

For example, in optimal control problems governed by ordinary differential equations, \cite{lou9, Neu63, PJ2009} considered cases with free terminal states, whereas \cite{Balder, Ray90} treated problems with terminal state constraints. This list of such references is only partial. Nevertheless, to the best of our knowledge, there exist seemingly simple optimal control problems---such as Example~\ref{ex31}---for which the existence of an optimal pair cannot be established using existing results, as discussed in Section~\ref{srse31}.

At a fundamental level, optimal control problems can be viewed as a special class of
optimization problems. In variational methods and critical point theory, a
central compactness assumption is the Palais--Smale (PS) condition. Roughly
speaking, for a \(C^1\) functional \(\Phi\) on an infinite-dimensional
space \(Z\), the (PS) condition at a level $C\in\mathbb R$ requires that every sequence
\(\{z_k\}\subset Z\) satisfying
\[
    \Phi(z_k)\to C,
    \qquad
    \Phi'(z_k)\to0,
\]
admits a convergent subsequence in \(Z\). This compactness condition prevents minimizing or critical sequences from escaping without converging. It is therefore widely used as a sufficient condition for proving the existence of critical points or minimizers; see, for example,
\cite[Theorem 4.4, p.~82]{MR982267}.

Inspired by this viewpoint, in this paper we propose to adapt the (PS) condition framework to optimal control
problems by introducing an appropriate Palais--Smale-type ((PS)-type) condition that ensures the existence of
optimal pairs. This allows us to prove the existence of optimal
pairs without imposing the usual convexity assumptions on the control
system.

In what follows, we state the optimal control problem under consideration.

Let $T>0$,  let $n$ be a positive integer, and let $U$ be a separable metric space. Denote by $\mathbb R^n$ the set of all real
$n$-dimensional column vectors. Suppose that $f=(f_1,\cdots,f_n)^\top:[0,T]\times\mathbb R^n\times U\to\mathbb R^n$, $f^0: [0,T]\times\mathbb R^n\times U\to\mathbb R$ and $h:\mathbb R^n\to\mathbb R$ are suitable maps.
Given $x_0 \in\mathbb R^n$, we consider the following optimal control problem:
\begin{description}
\item[$(P)$] Minimize
\begin{align*}
J(u(\cdot))=\int_0^T f^0(t,x(t),u(t))d t+h(x(T))
\end{align*}
 over all pairs $(u(\cdot), x(\cdot))$ satisfying the following
\begin{align}\label{cs}
&\left\{\begin{array}{l}
\dot{x}(t)=f(t, x(t), u(t)),\quad u(t)\in U,\quad a.e.\,t\in(0,T),
\\ x(0)=x_0,
\end{array} \right.
\end{align}
\end{description}
For this problem, we set
\begin{equation}\label{uab}
\mathcal U=\{u:[0,T]\to U\mid u(\cdot)\text{ is measurable}\}.
\end{equation}
A pair $(u(\cdot),x(\cdot))$ is said to be  admissible  if it satisfies (\ref{cs}),    $u(\cdot)\in \mathcal U$  and $J(u(\cdot))<+\infty$. We denote by $\mathcal P_{ad}$ the set of all admissible pairs. If an admissible pair $(\bar u(\cdot), \bar x(\cdot))$  satisfies $J(\bar u(\cdot))=\inf_{u(\cdot)\in\mathcal U}J(u(\cdot))$,  then the pair \((\bar{u}(\cdot), \bar{x}(\cdot))\) and the control \(\bar{u}(\cdot)\) are referred to as an optimal pair and an optimal control, respectively.

 The application of critical point theory to optimal control problems has been explored in the earlier literature. For example, Vakhrameev \cite{Vakhrameev94} investigated optimal control problems in which the terminal state is constrained to lie on a submanifold with corners. Under suitable conditions, such problems can be transformed into optimization problems defined on infinite-dimensional manifolds with corners, thereby allowing established critical point theory to be applied. In particular, the existence of optimal solutions is guaranteed when the corresponding infinite-dimensional (PS) condition holds. However, this approach has limitations: transforming the original optimal control problem into an infinite-dimensional optimization problem requires several additional assumptions. Among them, three fundamental conditions must be met:
\begin{itemize}
\item[1)] there is no constraint on the controls;
\item[2)] the terminal state constraint must define a submanifold with corners;
\item[3)] this submanifold must be contained in the reachable set of the control system.
\end{itemize}
Clearly, our problem $(P)$ generally does not satisfy these requirements.

In this work, we adapt methodologies from critical point theory, specifically by directly applying Ekeland's Variational Principle to the control problem (P). This necessitates introducing an appropriate control-theoretic version of the (PS)-type condition (see Definition~\ref{pstc}), which ensures the existence of optimal pairs.

This paper is organized as follows. Section~\ref{mre} contains the main results in this work. Section~\ref{threxs} is devoted to examples. The proofs of our main results are given in Section~\ref{pmr}.

\section{Main results }\label{mre}
\def\theequation{2.\arabic{equation}}
\hskip\parindent

\subsection{The (PS)-type condition (of control version)}
In the rest of this paper, we impose the following assumption.
\begin{description}
\item[(A)]   The maps \( f: [0, T] \times \mathbb{R}^n \times U \to \mathbb{R}^n \) and \( f^0: [0, T] \times \mathbb{R}^n \times U \to \mathbb{R} \) are measurable in \( t \in [0, T] \), and of class \( C^1 \) in \( x \in \mathbb{R}^n \). The map \( h: \mathbb{R}^n \to \mathbb{R} \) is also \( C^1 \). For a.e. \( t \in [0, T] \), the maps \( f(t, \cdot, \cdot) \), \( \nabla_x f(t, \cdot, \cdot) \), \( f^0(t, \cdot, \cdot) \), and \( \nabla_x f^0(t, \cdot, \cdot) \) are continuous on $\mathbb R^n\times U$, where \( \nabla_x f(t, x, u) \) and \( \nabla_x f^0(t, x, u) \) denote the partial derivatives of \( f \) and \( f^0 \) with respect to \( x \), respectively.

There exist  positive constants \( K \),  $K_1$ and $K_2$ such that
\begin{align}\label{bnf1}
&|\nabla_x f(t, x, u)|\leq K,\quad \, |f(t, 0, u)|  \leq K_1,
\\& \label{bfz7} |f^0(t, 0, u)|\leq K_2,
\end{align}
hold for almost every \( t \in [0, T] \) and all \( (x, u) \in \mathbb{R}^n \times U \).

Moreover, for any \( r > 0 \), there exists a positive constant \( C^0_r  \) such that
\begin{align}\label{bnfz1}
|\nabla_x f^0(t, x, u)| \leq C^0_r, \quad a.e. \,t\in [0, T], \;\forall\; (x, u) \in \mathbb{R}^n \times U \text{ with } |x| \leq r.
\end{align}
For any \( \epsilon > 0 \), there exists \( \delta > 0 \), depending on \( \epsilon \) and \( r \), such that for all \( y \in \mathbb{R}^n \) with \( |y| \leq \delta \),
\begin{equation}\label{bnfz2}
|\nabla_x f^0(t, x + y, u) - \nabla_x f^0(t, x, u)| < \epsilon, \; \forall\, (x, u) \in  \mathbb{R}^n \times U \text{ with } |x| \leq r,
\end{equation}
holds for a.e. $t\in [0, T]$.
\end{description}

Denote by $(\mathbb R^n)^*$ the set of all real
$n$-dimensional vectors. The Hamiltonian function $H(\cdot,\cdot,\cdot,\cdot): [0, T]\times \mathbb R^n\times(\mathbb R^n)^*\times U\to\mathbb R$ (for the problem $(P)$) is given by
\begin{align}\label{hf1}
H(t,x,p,u)=p f(t,x,u)-f^0(t,x,u),\quad \forall\,(t,x,p,u)\in [0, T]\times \mathbb R^n\times(\mathbb R^n)^*\times U.
\end{align}
The set \(\mathcal U\) (given by \eqref{uab}) becomes a metric space when equipped with the Ekeland metric \(d\), defined by
\begin{equation}\label{ekem}
d(u_1(\cdot),u_2(\cdot))=\bigl|\{t\in[0,T]\mid u_1(t)\neq u_2(t)\}\bigr|,
\end{equation}
where \(|A|\) denotes the Lebesgue measure of a Lebesgue measurable subset \(A\subset\mathbb R\).

We now propose the following control version of the (PS)-type condition.
\begin{definition}\label{pstc}
The problem $(P)$ is said to satisfy the (PS)-type condition at a level $C\in\mathbb R$ if the following holds: for any sequences $\{u_k(\cdot)\}\subset \mathcal U$ and $\{c_k\}\subset[0,+\infty)$ satisfying
\begin{equation}\label{ps3}
\left\{\begin{array}{l}
\int_0^T\left( H(t, x_k(t), p_k(t),u_k(t))-H(t,x_k(t), p_k(t),u(t)) \right) dt\geq -c_k,
\\
\phantom{\int_0^T\Big( H(t, x_k(t), p_k(t),u_k(t))-\Big)}\forall\; (u(\cdot),k)\in \mathcal U\times \mathbb N^+,
\\
\lim_{k\to+\infty}c_k=0,\quad \lim_{k\to+\infty} J(u_k(\cdot))=C,
\end{array}
\right.
\end{equation}
where $x_k(\cdot)$ is the solution to (\ref{cs}) corresponding to the control $u_k(\cdot)$,
 and $p_k(\cdot)$ is the solution to
\begin{align}\label{dek}
\left\{ \begin{array}{l} \dot p_k(t)=-p_k(t)\nabla_x f(t,x_k(t),u_k(t)) +\nabla_x f^0(t,x_k(t),u_k(t)),\quad a.e.\,t\in(0,T),
\\ p_k(T)=-\nabla h(x_k(T)),
\end{array}
\right.
\end{align}
 there exists a subsequence of $\{u_k(\cdot)\}$  that converges in the metric space $(\mathcal U,d)$.
\end{definition}

\subsection{Main results}

\begin{theorem}\label{peo}
Assume that (A) holds, \(\mathcal U\neq\emptyset\),
$
    C:=\inf_{u(\cdot)\in\mathcal U}J(u(\cdot))
    \in(-\infty,+\infty)$,
and that the problem \((P)\) satisfies the (PS)-type condition at \(C\).
Then the problem \((P)\) admits at least one optimal pair.
\end{theorem}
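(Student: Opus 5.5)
The plan is to apply Ekeland's variational principle on the metric space $(\mathcal U,d)$. This produces a minimizing sequence satisfying an approximate Pontryagin condition of the form \eqref{ps3}. The (PS)-type condition then yields a convergent subsequence, and continuity of $J$ on $(\mathcal U,d)$ shows that its limit is optimal.

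\textbf{Step 1 (metric and continuity).} First I recall that $(\mathcal U,d)$ is a complete metric space. A $d$-Cauchy sequence has a subsequence with $\sum_k d(u_k,u_{k+1})<\infty$. By Borel--Cantelli, for a.e.\ $t$ this subsequence is eventually constant, which defines a measurable limit $u$ with $d(u_k,u)\to0$.

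Next I show that $u\mapsto J(u)$ is continuous, indeed finite-valued, on $(\mathcal U,d)$. By \eqref{bnf1}, $|f(t,x,u)|\le K_1+K|x|$, so Gronwall gives a uniform bound $|x_u(t)|\le r$ for all $u\in\mathcal U$. Comparing $x_u$ and $x_v$ with Gronwall and the Lipschitz bound gives $\|x_u-x_v\|_\infty\le C\,d(u,v)$. Indeed, the difference of the dynamics is bounded by $K|x_u-x_v|$ off the set $\{u\ne v\}$, and by $2(K_1+Kr)$ on that set. For the cost, \eqref{bfz7} and \eqref{bnfz1} give $|f^0(t,x,u)|\le K_2+C^0_r r$ for $|x|\le r$. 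The same splitting then yields $|J(u)-J(v)|\le C'\,d(u,v)$, using also that $h$ is $C^1$ on the ball of radius $r$. Hence $J$ is even Lipschitz, and in particular lower semicontinuous and bounded below by $C$.

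\textbf{Step 2 (Ekeland).} For $\varepsilon_k\downarrow0$ I pick $v_k$ with $J(v_k)\le C+\varepsilon_k^2$. Ekeland's principle then gives $u_k\in\mathcal U$ with $J(u_k)\le J(v_k)$ and
\[
J(u)\ge J(u_k)-\varepsilon_k\, d(u,u_k)\quad\forall\,u\in\mathcal U .
\]
So $J(u_k)\to C$.

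\textbf{Step 3 (approximate maximum principle; the main obstacle).} I must turn the Ekeland inequality into the integrated Hamiltonian inequality \eqref{ps3} with some $c_k\to0$. The key difficulty is that the inequality is needed for every $u\in\mathcal U$ simultaneously, with a constant independent of $u$. The ingredients are the needle/spike variations used in the classical proof of the maximum principle.

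Fix $u$ and, for $\rho\in(0,1]$, choose measurable sets $E_\rho\subset[0,T]$ with $|E_\rho|=\rho T$ such that
\[
\frac1\rho\chi_{E_\rho}\to1
\]
weakly-$*$ against the relevant functions. This is done by the standard Lyapunov/Riemann--Lebesgue construction applied to $\delta H(t):=H(t,x_k,p_k,u_k)-H(t,x_k,p_k,u)$ and to the related functions. Set $u^\rho=u$ on $E_\rho$ and $u^\rho=u_k$ elsewhere. The usual first-order expansion gives
\[
J(u^\rho)-J(u_k)=-\rho\int_0^T\delta H\,dt+o(\rho).
\]
It uses the adjoint \eqref{dek} and the uniform continuity \eqref{bnfz2} of $\nabla_xf^0$, together with that of $\nabla_x f$ and $\nabla h$ on bounded sets; for $\nabla_x f$ this presumably comes from the continuity and boundedness assumptions in (A), and this deserves care. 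Combined with the Ekeland inequality $J(u^\rho)-J(u_k)\ge-\varepsilon_k\rho T$, dividing by $\rho$ and letting $\rho\to0$ gives
\[
\int_0^T\delta H\,dt\ge-\varepsilon_k T .
\]
So $c_k=\varepsilon_kT$ works, and it does not depend on $u$. The limit $\rho\to0$ is taken for each fixed $u$ separately, so uniformity of the $o(\rho)$ term in $u$ is not required. I expect the careful justification of this expansion to be the technical heart of the proof: the spike estimate $\|x^\rho-x_k-\rho y\|_\infty=o(\rho)$, with $y$ the solution of the variational equation, and the handling of the merely measurable dependence on $t$.

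\textbf{Step 4 (conclusion).} By the (PS)-type condition at level $C$, a subsequence $u_{k_j}\to\bar u$ in $(\mathcal U,d)$. By the continuity from Step 1, $J(\bar u)=\lim_j J(u_{k_j})=C$, and $J(\bar u)<\infty$, so $(\bar u,\bar x)\in\mathcal P_{ad}$ is an optimal pair.
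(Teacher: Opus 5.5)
Your plan is essentially the paper's proof: Ekeland's principle on the complete space $(\mathcal U,d)$ with $J$ continuous, then spike variations on sets $E_\rho$ with $|E_\rho|=\rho T$ (which the paper imports from the cited monograph rather than constructing), then duality with the adjoint \eqref{dek} to obtain $\int_0^T\delta H\,dt\ge -c_k$ with $c_k$ independent of $u$, and finally the (PS)-type condition together with continuity of $J$. One sign needs fixing: with $\delta H=H(\cdot,x_k,p_k,u_k)-H(\cdot,x_k,p_k,u)$ the expansion reads $J(u^\rho)-J(u_k)=\rho\int_0^T\delta H\,dt+o(\rho)$, since your minus sign would give the reverse inequality; also, you need no uniform continuity of $\nabla_x f$, because pointwise continuity for a.e.\ $t$, the bound $K$ and dominated convergence already make the remainder $o(\rho)$.
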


\begin{remark}\label{rem:PS_sufficient_not_necessary}
The \((PS)\)-type condition in Theorem \ref{peo} is a
sufficient compactness condition for the existence of optimal controls, but it is not a necessary one.  This is analogous to the role of the
classical \((PS)\) condition in critical point theory: it is a compactness hypothesis used to obtain existence results, but it is not itself necessary for the existence of critical points. See Example \ref{edsps} for the corresponding phenomenon in the present optimal control setting.
\end{remark}

\medskip

Next, we show that a class of optimal control problems satisfies the
\((PS)\)-type condition.  For this purpose, we need the
following result.

\begin{lemma}\label{bxec}
Assume that  (A) holds. For any \(u(\cdot)\in\mathcal U\),
denote by \(x_u(\cdot)\) the solution of \eqref{cs} corresponding to
\(u(\cdot)\), and by \(p_u(\cdot)\) the solution of
\begin{align}\label{dek_u}
\left\{
\begin{array}{l}
\dot p_u(t)
=
-p_u(t)\nabla_x f(t,x_u(t),u(t))
+
\nabla_x f^0(t,x_u(t),u(t)),
\quad a.e.\,t\in(0,T),
\\[1mm]
p_u(T)=-\nabla h(x_u(T)).
\end{array}
\right.
\end{align}
Then
\begin{align}\label{bxu11}
|x_u(t)|\le R(t),\qquad |p_u(t)|\le P(t),
    \quad t\in[0,T],
\end{align}
where
\[
R(t)
=
|x_0|e^{Kt}
+
K_1\frac{e^{Kt}-1}{K},
\]
and
\[
P(t)
=
\sup_{|x|\leq R(T)}|\nabla h(x)|\, e^{K(T-t)}
+
C_{R(T)}^0\frac{e^{K(T-t)}-1}{K}.
\]
Moreover, the family
\(
    \{(x_u(\cdot),p_u(\cdot))\mid u(\cdot)\in\mathcal U\}
\)
is relatively compact in
\(
C([0,T];\mathbb R^n)\times C([0,T];(\mathbb R^n)^*).
\)
\end{lemma}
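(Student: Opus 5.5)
The plan has three steps: a Gronwall estimate for $x_u$, the same estimate run backward for $p_u$, and then Arzel\`a--Ascoli for the compactness.

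\textbf{Bound on $x_u$.} By (A) and the mean value theorem,
\[
|f(t,x,u)|\le |f(t,0,u)|+K|x|\le K_1+K|x| .
\]
Hence the right-hand side of \eqref{cs} is bounded by an affine function of $|x|$. First, existence and uniqueness of an absolutely continuous solution $x_u(\cdot)$ on all of $[0,T]$ follow from the Carath\'eodory theorem. This uses the global Lipschitz bound in $x$ together with measurability of $t\mapsto f(t,x,u(t))$, which holds since $f(t,\cdot,\cdot)$ is continuous and $u$ is measurable. Next, write
\[
|x_u(t)|\le |x_0|+\int_0^t \bigl(K_1+K|x_u(s)|\bigr)\,ds .
\]
Compare with the solution $R$ of $\dot R=KR+K_1$, $R(0)=|x_0|$; this is exactly the stated $R(t)$. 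Gronwall's inequality then gives $|x_u(t)|\le R(t)\le R(T)$.

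\textbf{Bound on $p_u$.} Since $|x_u(t)|\le R(T)$, assumption \eqref{bnfz1} gives $|\nabla_x f^0(t,x_u(t),u(t))|\le C^0_{R(T)}$ for a.e.\ $t$. Also $|\nabla_x f|\le K$, and the terminal value satisfies $|p_u(T)|\le \sup_{|x|\le R(T)}|\nabla h(x)|$, which is finite because $\nabla h$ is continuous. The adjoint equation \eqref{dek_u} is linear with bounded measurable coefficients, so it has a unique absolutely continuous solution. Integrating backward from $T$,
\[
|p_u(t)|\le |p_u(T)|+\int_t^T\bigl(K|p_u(s)|+C^0_{R(T)}\bigr)\,ds .
\]
Backward Gronwall, compared with the solution of $-\dot P=KP+C^0_{R(T)}$ with $P(T)=\sup_{|x|\le R(T)}|\nabla h(x)|$, yields exactly $P(t)$.

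\textbf{Relative compactness.} I will use the Arzel\`a--Ascoli theorem. Uniform boundedness follows from \eqref{bxu11}, since $R(t)\le R(T)$ and $P(t)\le P(0)$. For equicontinuity, note that
\[
|\dot x_u(t)|\le K_1+KR(T), \qquad |\dot p_u(t)|\le KP(0)+C^0_{R(T)}
\]
for a.e.\ $t$. These bounds do not depend on $u$, so the family is uniformly Lipschitz. Arzel\`a--Ascoli in each factor, followed by a diagonal extraction, gives relative compactness in the product space.

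The only mildly delicate point is that the constants are uniform in $u$. This requires that the bounds in (A) hold for all $u\in U$ with a.e.\ $t$, and that the estimate for $p$ be localized to the ball of radius $R(T)$ through \eqref{bnfz1}. Otherwise the argument is routine.
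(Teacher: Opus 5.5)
Your proposal is correct and follows essentially the same route as the paper. You use forward Gronwall for $x_u$, Gronwall for $p_u$ run backward from $T$ (the paper phrases this via the time reversal $\tau=T-t$), and uniform derivative bounds with Arzel\`a--Ascoli for compactness; your added Carath\'eodory well-posedness remark and the constant Lipschitz bounds $K_1+KR(T)$ and $KP(0)+C^0_{R(T)}$ in place of the paper's time-dependent ones are harmless refinements.
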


\begin{proposition}
\label{scspro}
Let \(T>0\), \(x_0\in\mathbb R^n\), \(m\in\mathbb N^+\setminus\{1\}\), $C\in\mathbb R$,  and let
$
    U=\{v_1,\ldots,v_m\}
$
be a finite control set.  Assume that (A) holds. Assume further that the following condition is satisfied:

\begin{enumerate}
\item[(H)] For any sequences \(\{u_k(\cdot)\}\subset\mathcal U\) and
\(\{c_k\}\subset[0,\infty)\) satisfying \eqref{ps3}, let \(x_k(\cdot)\) be the solution of \eqref{cs} corresponding to the control \(u_k(\cdot)\), and let \(p_k(\cdot)\) be the solution of \eqref{dek}. By Lemma \ref{bxec}, there exist a subsequence of
\(
\{(x_k(\cdot),p_k(\cdot))\}_{k\ge1}\),
still denoted by itself, and
\(
(x_*(\cdot),p_*(\cdot))
\in
C([0,T];\mathbb R^n)\times C([0,T];(\mathbb R^n)^*)
\)
such that
\(
(x_k(\cdot),p_k(\cdot))
\to
(x_*(\cdot),p_*(\cdot))
\) uniformly on \([0,T]\).
For every such limiting pair \((x_*(\cdot),p_*(\cdot))\), assume that, for a.e. \(t\in[0,T]\), the function
\[
v\mapsto H(t,x_*(t),p_*(t),v),\qquad v\in U,
\]
has a unique maximizer over \(U\). Equivalently, if the numbers 
\[
    H(t,x_*(t),p_*(t),v_1),\ldots,
    H(t,x_*(t),p_*(t),v_m)
\]
are arranged in a non-increasing order
\(
    \lambda_1(t)\geq\lambda_2(t)\geq\cdots\geq\lambda_m(t)
\),
and if
\begin{align}
    \label{deta8}
    \Delta_*(t):=\lambda_1(t)-\lambda_2(t),
\end{align}
then
\begin{align}\label{D80}
    \left|\{t\in[0,T]\mid \Delta_*(t)=0\}\right|=0.
\end{align}
\end{enumerate}

Then the problem \((P)\) satisfies the (PS)-type condition at \(C\).
\end{proposition}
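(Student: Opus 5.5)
We take sequences $\{u_k(\cdot)\}\subset\mathcal U$ and $\{c_k\}$ satisfying \eqref{ps3}. By Lemma~\ref{bxec} we pass to a subsequence along which $(x_k(\cdot),p_k(\cdot))\to(x_*(\cdot),p_*(\cdot))$ uniformly on $[0,T]$. We then show that this subsequence $\{u_k(\cdot)\}$ is Cauchy in $(\mathcal U,d)$. In fact we aim to show $d(u_k,u_*)\to0$ for the explicit limit $u_*(t):=$ the unique maximizer of $v\mapsto H(t,x_*(t),p_*(t),v)$, which by (H) is defined for a.e.\ $t$. Measurability of $u_*$ holds because $U$ is finite: $u_*(t)=v_i$ exactly on the set where $H(t,x_*(t),p_*(t),v_i)>H(t,x_*(t),p_*(t),v_j)$ for all $j\ne i$, and each $t\mapsto H(t,x_*(t),p_*(t),v_j)$ is measurable by (A).

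\textbf{Step 1: almost-maximality of $u_k$.} We use the first inequality of \eqref{ps3} with the pointwise maximizer $\hat u_k(t)$ of $H(t,x_k(t),p_k(t),\cdot)$ over $U$. A measurable selection exists trivially, by taking the smallest index achieving the maximum. Set
\[
g_k(t):=\max_{v\in U}H(t,x_k(t),p_k(t),v)-H(t,x_k(t),p_k(t),u_k(t))\ge0 .
\]
Then $\int_0^T g_k\,dt\le c_k\to0$.

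\textbf{Step 2: passage to the limit.} By the uniform convergence of $(x_k,p_k)$, the bounds of Lemma~\ref{bxec}, and the continuity in (A) for a.e.\ fixed $t$, we have $H(t,x_k(t),p_k(t),v_i)\to H(t,x_*(t),p_*(t),v_i)$ for each $i$ and a.e.\ $t$. Here $U$ is finite, so continuity in $x$ suffices. Hence
\[
\epsilon_k(t):=\max_i\bigl|H(t,x_k(t),p_k(t),v_i)-H(t,x_*(t),p_*(t),v_i)\bigr|\to0 \quad\text{for a.e. } t .
\]
Now fix $t$ with $u_k(t)\ne u_*(t)$. Comparing with the limit Hamiltonian, $H(t,x_*,p_*,u_k(t))\le \lambda_1(t)-\Delta_*(t)$, so
\[
g_k(t)\ge \lambda_1(t)-\epsilon_k(t)-\bigl(\lambda_1(t)-\Delta_*(t)+\epsilon_k(t)\bigr)=\Delta_*(t)-2\epsilon_k(t).
\]

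\textbf{Step 3: measure estimate.} Fix $\eta>0$ and split $\{u_k\ne u_*\}$ into three pieces:
\[
\{\Delta_*\le\eta\},\qquad \{\epsilon_k\ge\eta/4\},\qquad \{u_k\ne u_*,\ \Delta_*>\eta,\ \epsilon_k<\eta/4\}.
\]
On the third set $g_k>\eta/2$, so by Chebyshev's inequality its measure is at most $2c_k/\eta$. The second set has measure tending to $0$, since a.e.\ convergence implies convergence in measure on $[0,T]$. Therefore
\[
\limsup_k d(u_k,u_*)\le|\{\Delta_*\le\eta\}|.
\]
Letting $\eta\downarrow0$, the right-hand side tends to $|\{\Delta_*=0\}|=0$ by \eqref{D80} and continuity of measure. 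Thus $u_k\to u_*$ in $(\mathcal U,d)$.

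The delicate point is Step 2. It needs a.e.\ convergence of the Hamiltonian values, using only the a.e.-$t$ continuity in (A) together with the uniform bounds from Lemma~\ref{bxec}. It also needs care that the null sets where (A) or (H) fails do not depend on $k$: we take a single countable union of null sets. The condition $J(u_k)\to C$ is not needed.
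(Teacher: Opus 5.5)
Your argument is correct and is essentially the paper's. Like the paper, you test the approximate maximum condition in \eqref{ps3} against a measurable pointwise maximizer, bound the bad set by Markov's inequality, and conclude using the gap $\Delta_*$ of the limiting Hamiltonian together with \eqref{D80} and continuity of measure. The differences are minor: you prove $d(u_k,u_*)\to0$ directly for the explicit limit $u_*$, so you do not need completeness of $(\mathcal U,d)$, whereas the paper proves the Cauchy estimate \eqref{eukl} and then invokes completeness; and you use a.e.\ convergence of the Hamiltonian values plus convergence in measure, instead of the uniform convergence \eqref{hkv8} that the paper derives from the bounds in (A).
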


\setcounter{equation}{0}
\section{Examples}\label{threxs}
\def\theequation{3.\arabic{equation}}
\hskip\parindent
\subsection{Example satisfying the (PS)-type condition }

\begin{example}\label{ex31} Let \(T=1\), \(U=\{1,2,3,4\}\), and
\begin{align}\label{mcs1}
\mathcal U
    =
    \{u:[0,1]\to U\mid u(\cdot)\ {\rm is\ measurable}\}.
\end{align}
 We consider the control system \begin{equation}\label{eq:state_dependent_example} \left\{ \begin{array}{l} \dot x_1(t)=x_1(t)\sin u(t)+u(t),\\[1mm] \dot x_2(t)=x_2(t)\cos u(t)+u(t)^2,\\[1mm] x(0)=0, \end{array} \right. \qquad u(t)\in U,\quad a. e. \,t\in[0,1]. \end{equation} The cost functional is \begin{equation}\label{eq:J_state_dependent_example} J(u(\cdot)) = \int_0^1 Mt\,u(t)\,dt + \sin x_1^u(1)+\cos x_2^u(1), \end{equation} where \(x^u(\cdot)\) is the solution of \eqref{eq:state_dependent_example} corresponding to \(u(\cdot)\), and \begin{align}\label{lbm} M>(2e-1) \bigl[ 4(\sin 1-\sin 4) + 16(\cos 1-\cos 4) \bigr] + 18. \end{align} Then the above optimal control problem admits at least one optimal control. \end{example}

\begin{proof} We verify the assumptions of Proposition \ref{scspro} and then apply Theorem \ref{peo}.

 {\it Step 1.}\quad Set \begin{align*} f(t,x,u) = \begin{pmatrix} x_1\sin u+u\\[1mm] x_2\cos u+u^2 \end{pmatrix}, \quad f^0(t,x,u)=Mtu, \quad h(x)=\sin x_1+\cos x_2. \end{align*}
 The functions \(f\), \(f^0\), and \(h\) are smooth with respect to \(x\), and
\(U=\{1,2,3,4\}\) is finite. Hence the measurability and continuity assumptions
in (A) are immediate. In addition,
\(\nabla_x f(t,x,u)=\mathrm{diag}(\sin u,\cos u)\) is bounded by \(K=1\) on
\([0,1]\times \mathbb R^2\times U\). Since
\(f(t,0,u)=(u,u^2)^\top\) and \(f^0(t,0,u)=Mtu\) are bounded on
\([0,1]\times U\), and \(\nabla_x f^0(t,x,u)\equiv0\), condition (A) holds.

{\it Step 2.}\quad For any \(u(\cdot)\in\mathcal U\), we claim \begin{align}\label{xu1} & |x_{1}^u(t)| \leq 4(e^t-1),\quad |x_{2}^u(t)| \leq 16(e^t-1), \qquad t\in[0,1], \\\label{dxu1} & |\dot x_{1}^u(t)| \leq 4e^t,\quad |\dot x_{2}^u(t)| \leq 16e^t, \qquad \text{for a.e. }t\in[0,1], \\\label{pu1} & |p_{1}^u(t)| \leq e^{1-t},\quad |p_{2}^u(t)| \leq e^{1-t},\quad t\in[0,1], \\ \label{dpu1} & |\dot p_{1}^u(t)| \leq e^{1-t},\quad |\dot p_{2}^u(t)| \leq e^{1-t},\quad \text{for a.e. }t\in[0,1], \end{align} where \[ x^u(\cdot)=(x_1^u(\cdot),x_2^u(\cdot)) \] is the solution to \eqref{eq:state_dependent_example} corresponding to the control \(u(\cdot)\), and \[ p^u(\cdot)=(p_1^u(\cdot),p_2^u(\cdot)) \] is the solution to the adjoint equation \begin{align}\label{pu6} \left\{ \begin{array}{l} \dot p_{1}^u(t)=-p_{1}^u(t)\sin u(t), \\[1mm] \dot p_{2}^u(t)=-p_{2}^u(t)\cos u(t), \\[1mm] p^u(1)=(-\cos x_{1}^u(1),\sin x_{2}^u(1)). \end{array} \right. \end{align} Indeed, since \(u(t)\in\{1,2,3,4\}\) for a.e. \(t\in[0,1]\), we have \[ |\sin u(t)|\leq 1,\qquad |\cos u(t)|\leq 1, \qquad |u(t)|\leq 4,\qquad |u(t)|^2\leq 16, \quad \text{for a.e. }t\in[0,1]. \] Hence \begin{align}\label{dx1dx2} |\dot x_{1}^u(t)| \leq |x_{1}^u(t)|+4, \qquad |\dot x_{2}^u(t)| \leq |x_{2}^u(t)|+16, \quad \text{for a.e. }t\in[0,1]. \end{align} Equivalently, \[ |x_{1}^u(t)| \leq \int_0^t |x_{1}^u(s)|\,ds+4t, \quad |x_{2}^u(t)| \leq \int_0^t |x_{2}^u(s)|\,ds+16t, \quad t\in[0,1]. \] By Gronwall's inequality, we derive \eqref{xu1}. Consequently, \eqref{dxu1} follows from \eqref{dx1dx2} and \eqref{xu1}.

We now prove \eqref{pu1}. We make the time-reversal change of variables \[ \tau=1-t, \qquad \widetilde p_{1}^u(\tau):=p_{1}^u(1-\tau), \qquad \tau\in[0,1]. \] Then \[ \begin{aligned} \dot{\widetilde p}_{1}^u(\tau) &= -\bigl[-p_{1}^u(1-\tau)\sin u(1-\tau)\bigr] \\ &= \widetilde p_{1}^u(\tau)\sin u(1-\tau), \qquad \text{for a.e. }\tau\in(0,1). \end{aligned} \] Moreover, \[ \widetilde p_{1}^u(0) = p_{1}^u(1) = -\cos x_{1}^u(1), \] and therefore $ |\widetilde p_{1}^u(0)|\leq 1$.  Integrating the equation for \(\widetilde p_{1}^u\) from \(0\) to \(\tau\), we get \[ \widetilde p_{1}^u(\tau) = \widetilde p_{1}^u(0) + \int_0^\tau \widetilde p_{1}^u(s)\sin u(1-s)\,ds. \] Thus, \[ |\widetilde p_{1}^u(\tau)| \leq 1+ \int_0^\tau |\widetilde p_{1}^u(s)|\,ds. \] By Gronwall's inequality, \[ |\widetilde p_{1}^u(\tau)| \leq e^\tau, \qquad \tau\in[0,1]. \] Returning to the original time variable \(t=1-\tau\), we obtain \[ |p_{1}^u(t)| = |\widetilde p_{1}^u(1-t)| \leq e^{1-t}, \qquad t\in[0,1]. \] Similarly, we estimate \(p_2^u(t)\) in the same way and obtain \[ |p_{2}^u(t)| \leq e^{1-t}, \qquad t\in[0,1]. \] Hence \eqref{pu1} holds.

Finally, \[ |\dot p_{1}^u(t)| = |p_{1}^u(t)\sin u(t)|, \quad |\dot p_{2}^u(t)| = |p_{2}^u(t)\cos u(t)|, \quad \text{for a.e. }t\in[0,1], \] which, together with \eqref{pu1}, implies \eqref{dpu1}.

{\it Step 3.}\quad Denote by $ C:=\inf_{u(\cdot)\in\mathcal U}J(u(\cdot))$. Since \(u(t)\in\{1,2,3,4\}\) for almost every $t\in[0,1]$,
we have \[ \frac{M}{2}-2 \leq J(u(\cdot)) \leq 2M+2, \qquad \forall\,u(\cdot)\in\mathcal U. \] Thus \(C\in(-\infty,+\infty)\). Let \(\{u_k(\cdot)\}\subset\mathcal U\)
 and \(\{c_k\}\subset [0,+\infty)\) satisfy \eqref{ps3}. Let \(x_k(\cdot)\) be
 the solution to \eqref{eq:state_dependent_example} corresponding to \(u_k(\cdot)\), and let
  \(p_k(\cdot)\) be the solution to \eqref{pu6} corresponding to \(u_k(\cdot)\). By
  Lemma \ref{bxec}, after passing to a subsequence if necessary, we may suppose that \begin{align*} (x_k(\cdot),p_k(\cdot)) \to (x_*(\cdot),p_*(\cdot)) \quad \text{uniformly on }[0,1]. \end{align*} We claim that \begin{align}\label{x8e} &|x_{*,1}(t)|\leq 4(e^t-1),\quad |x_{*,2}(t)| \leq 16(e^t-1), \quad \forall\, t\in[0,1], \\\label{p8e} & |p_{*,1}(t)| \leq e^{1-t},\quad |p_{*,2}(t)| \leq e^{1-t},\quad t\in[0,1], \\\label{eq:component_dot_xstar_bounds} &|\dot x_{*,1}(t)| \leq 4e^t,\quad |\dot x_{*,2}(t)|\leq 16e^t, \qquad \text{for a.e. }t\in[0,1], \\\label{eq:component_dot_pstar_bounds} &|\dot p_{*,1}(t)| \leq e^{1-t}, \quad |\dot p_{*,2}(t)| \leq e^{1-t}, \qquad \text{for a.e. }t\in[0,1]. \end{align} In fact, passing to the limit, we obtain \eqref{x8e} and \eqref{p8e} from \eqref{xu1} and \eqref{pu1}. Then, by \eqref{dxu1}, for every \(0\leq s<t\leq 1\), we have \begin{align*} |x_{k,1}(t)-x_{k,1}(s)| \leq \int_s^t 4e^\tau\,d\tau, \quad |x_{k,2}(t)-x_{k,2}(s)| \leq \int_s^t 16e^\tau\,d\tau. \end{align*} Passing to the limit, we obtain \begin{align*} |x_{*,1}(t)-x_{*,1}(s)| \leq \int_s^t 4e^\tau\,d\tau, \quad |x_{*,2}(t)-x_{*,2}(s)| \leq \int_s^t 16e^\tau\,d\tau. \end{align*} Therefore \(x_{*,1}(\cdot)\) and \(x_{*,2}(\cdot)\) are absolutely continuous and satisfy \eqref{eq:component_dot_xstar_bounds}. Similarly, following the same argument and using \eqref{dpu1}, we obtain \[ |p_{*,1}(t)-p_{*,1}(s)| \leq \int_s^t e^{1-\tau}\,d\tau, \quad |p_{*,2}(t)-p_{*,2}(s)| \leq \int_s^t e^{1-\tau}\,d\tau. \] Hence \(p_{*,1}(\cdot)\) and \(p_{*,2}(\cdot)\) are absolutely continuous and satisfy \eqref{eq:component_dot_pstar_bounds}.

{\it Step 4.}\quad We show that the condition (H) in Proposition
\ref{scspro} is satisfied. Consequently, it follows from Theorem \ref{peo} that there exists at least one optimal pair.

To this end, let $\{u_k(\cdot)\}\subset \mathcal U$ and $\{c_k\}\subset[0,+\infty)$ be given in Step 3.

For this problem, the Hamiltonian function is
\begin{align}\label{eq:Hamiltonian_state_dependent_example}
    H(t,x,p,u)
    &=
    p f(t,x,u)-f^0(t,x,u)
    \notag\\
    &=
    p_1(x_1\sin u+u)
    +
    p_2(x_2\cos u+u^2)
    -
    Mtu,
\end{align}
for $(t,x,p,u)\in[0,1]\times\mathbb R^2\times(\mathbb R^2)^*\times U$.
For \(v,w\in U\) with \(v\neq w\), set
\[\begin{aligned}
    G_{vw}(t)
    &:=
    H(t,x_*(t),p_*(t),v)
    -
    H(t,x_*(t),p_*(t),w),
    \\&=A_{vw}(t)-Mt(v-w),\quad \forall\,t\in[0,1],
    \end{aligned}
\]
where
\begin{align*}
    A_{vw}(t)
    &:=
    p_{*,1}(t)x_{*,1}(t)(\sin v-\sin w)
    +
    p_{*,1}(t)(v-w)
    \\
    &\quad
    +
    p_{*,2}(t)x_{*,2}(t)(\cos v-\cos w)
    +
    p_{*,2}(t)(v^2-w^2).
\end{align*}
Then, for almost every $t\in[0,1]$,
\[
    \dot G_{vw}(t)
    =
    \dot A_{vw}(t)-M(v-w).
\]

We now give an explicit estimate for \(|\dot A_{vw}(t)|\).
From \eqref{x8e}, \eqref{p8e},  \eqref{eq:component_dot_xstar_bounds} and
\eqref{eq:component_dot_pstar_bounds}, we have
\[
\begin{aligned}
\left|
\frac{d}{dt}\bigl(p_{*,1}(t)x_{*,1}(t)\bigr)
\right|
&\leq
|\dot p_{*,1}(t)|\,|x_{*,1}(t)|
+
|p_{*,1}(t)|\,|\dot x_{*,1}(t)|
\\
&\leq
e^{1-t}\cdot 4(e^t-1)
+
e^{1-t}\cdot 4e^t
\\
&=
4(2e-e^{1-t}),\qquad a.e.\,t\in[0,1],
\end{aligned}
\]
and similarly
\[
\left|
\frac{d}{dt}\bigl(p_{*,2}(t)x_{*,2}(t)\bigr)
\right|
\leq
16(2e-e^{1-t}),\quad a.e.\,t\in[0,1].
\]
Therefore,  we
set
\[
B_{vw}
:=
4|\sin v-\sin w|
+
16|\cos v-\cos w|,\quad  D_{vw}
:=
|v-w|+|v^2-w^2|,
\]
and obtain
\[
\begin{aligned}
|\dot A_{vw}(t)|
&\leq
4(2e-e^{1-t})|\sin v-\sin w|
+
e^{1-t}|v-w|
\\
&\quad
+
16(2e-e^{1-t})|\cos v-\cos w|
+
e^{1-t}|v^2-w^2|
\\&=e^{1-t}(D_{vw}-B_{vw})+2eB_{vw},\quad a.e.\,t\in[0,1].
\end{aligned}
\]

Since \(e^{1-t}\in[1,e]\), the preceding inequality yields
\[
|\dot A_{vw}(t)|
\leq
\max\left\{
(2e-1)B_{vw}+D_{vw},
\;
eB_{vw}+eD_{vw}
\right\},\quad a.e.\,t\in[0,1].
\]
Thus
\[
\max_{\substack{v,w\in\{1,2,3,4\}\\ v\neq w}}|\dot A_{vw}(t)|
\leq
\max_{\substack{v,w\in\{1,2,3,4\}\\ v\neq w}}
\max\left\{
(2e-1)B_{vw}+D_{vw},
\;
eB_{vw}+eD_{vw}
\right\},
\]
for almost every $t\in[0,1]$.
A direct computation over the finite set \(U=\{1,2,3,4\}\) shows that
\begin{align}\label{avwb}\begin{array}{ll}
\displaystyle\max_{\substack{v,w\in\{1,2,3,4\}\\ v\neq w}}|\dot A_{vw}(t)|
&\leq (2e-1)B_{14}+D_{14
}
\\[2mm]
&=(2e-1)
\bigl[
4(\sin 1-\sin 4)
+
16(\cos 1-\cos 4)
\bigr]
+
18,
\end{array}
\end{align}
for almost every $t\in[0,1]$.

Next,
if \(v>w\), then \(v-w\geq1\), and hence, by
\eqref{avwb} and \eqref{lbm},
\[
    \dot G_{vw}(t)
    \leq
   |\dot A_{vw}(t)| -M<0
    \quad
    \text{for a.e. }t\in[0,1].
\]
If \(v<w\), then \(w-v\geq1\), and therefore
\[
    \dot G_{vw}(t)
    \geq
    -|\dot A_{vw}(t)|+M>0
    \quad
    \text{for a.e. }t\in[0,1].
\]
Hence \(G_{vw}\) is strictly monotone on \([0,1]\). In particular,
\[
    \big|\{t\in[0,1]\mid G_{vw}(t)=0\}\big|=0.
\]
Since \(U\) is finite, we obtain
\[\begin{aligned}
    &\left|
    \left\{
        t\in[0,1]\mid
        H(t,x_*(t),p_*(t),v)
        =
        H(t,x_*(t),p_*(t),w)
        \text{ for some }v\neq w
    \right\}
    \right|
    \\&=\left| \bigcup_{\substack{v,w\in U,\\  v\neq w}}\{t\mid G_{vw}(t)=0\}
    \right|
    \\&=0.
\end{aligned}
\]
Equivalently, for a.e. \(t\in[0,1]\), the function
\[
    v\mapsto H(t,x_*(t),p_*(t),v),
    \qquad v\in U,
\]
has a unique maximizer. Thus the condition (H) in Proposition
\ref{scspro} is satisfied.
This completes the proof.
\end{proof}

\subsubsection{Some remarks on Example \ref{ex31}}\label{srse31}

\begin{remark}
The works \cite{Balder,Ray90} study the existence of optimal pairs for
optimal control problems with non-convex structures.  In particular,
\cite[Corollary~5.2]{Balder} applies to control systems of the form
\begin{align}\label{ybyb2}
 \dot y(t)=B(t)y(t)+b(t,u(t)),
    \qquad {\rm a.e.}\ t\in[t_0,t_1],
\end{align}
where \(t_0<t_1\), the matrix \(B(t)\) is independent of the control $u$, and
\(b(t,u)\) is independent of the state variable $y$.  Therefore, the coefficient
of the state variable is allowed to depend on time, but not on the control.

However, the control system \eqref{eq:state_dependent_example} in
Example~\ref{ex31} does not have this structure.
Indeed, it can be rewritten as
\[
    \dot x(t)=A(u(t))x(t)+\beta(u(t)),
\]
where
\[
    A(v)=
    \begin{pmatrix}
        \sin v & 0\\
        0 & \cos v
    \end{pmatrix},
    \qquad
    \beta(v)=
    \begin{pmatrix}
        v\\
        v^2
    \end{pmatrix},
    \qquad v\in U.
\]
Thus, the coefficient matrix of the state variable depends explicitly on the
control value \(v\).  Hence the system in
Example~\ref{ex31} is not covered by
\cite[Corollary~5.2]{Balder}.  Consequently, that corollary cannot be used to
deduce the existence of an optimal control for this example.


Theorem~4.2 in \cite{Ray90} also deals with systems of the form
\eqref{ybyb2}, together with the cost functional
\[
J(u(\cdot))=\int_{t_0}^{t_1}\Big(f^0(t,u(t))+g(t,y(t))\Big)\,dt
+h(y(t_1)).
\]
Here \(f^0:[t_0,t_1]\times\mathbb R^m\to\mathbb R\), with
\(m\in\mathbb N^+\), is independent of the state variable;
\(g:[t_0,t_1]\times\mathbb R^n\to\mathbb R\), with \(n\in\mathbb N^+\),
is independent of the control variable; and
\(h:\mathbb R^n\to\mathbb R\) is lower semicontinuous and bounded below.

To guarantee the existence of an optimal pair, \cite[Theorem~4.2]{Ray90}
imposes the following additional assumptions on \(f^0\) and \(g\):
\begin{description}
    \item[(i)] For almost every \(t\in[t_0,t_1]\), the function
    \(g(t,\cdot)\) is concave.

    \item[(ii)] Let \(q\in[1,+\infty)\). Define
    \(\Phi_q:\mathbb R\to[0,+\infty)\) as follows. If \(q>1\), then
    \(\Phi_q(\xi)=|\xi|^q\) for \(\xi\in\mathbb R\). If \(q=1\), then
    \(\Phi_1\) is assumed to be even and convex, and satisfies
    $
    \lim_{\zeta\to+\infty}\frac{\Phi_1(\zeta)}{\zeta}=+\infty$.
    Moreover, there exist \(a_1\in L^1(t_0,t_1)\), \(c_1>0\), and a
    function \(\Phi:[t_0,t_1]\times\mathbb R\to[0,+\infty)\) satisfying
    \[
    \lim_{\zeta\to+\infty}\Phi(t,\zeta)=+\infty,
    \quad \forall\,t\in[t_0,t_1],
    \]
    such that
    \[
    \Phi(t,|u|)+c_1\Phi_q(|b(t,u)|)+a_1(t)
    \leq f^0(t,u),
    \quad \forall\,(t,u)\in[t_0,t_1]\times\mathbb R^m.
    \]
\end{description}

The assumptions above cannot be satisfied simultaneously in Example~\ref{ex31}. Therefore,
\cite[Theorem~4.2]{Ray90} is not applicable to our setting.
\end{remark}

The following proposition follows directly from  \cite[Theorem 4.2, p.58]{Berkovitz} and Lemma  \ref{bxec}.
\begin{proposition} Assume that  (A) holds. Denote by $\overline{co}\,A$ the closed convex hull of a set $A$. Assume the weak Cesari property holds at any point $(t,x)\in[0,T]\times\mathbb R^n$:
\begin{align*}
\bigcap_{\delta>0}\overline{co}\,\mathcal E(t,\mathcal O_\delta(x))=\mathcal E(t,x),
\end{align*}
where $\mathcal O_\delta(x):=\{y\in\mathbb R^n\mid\,|y-x|<\delta\}$,
$$\mathcal E(t,y):=\{(z^0,z)\in\mathbb R\times \mathbb R^n\mid z=f(t,y,u),\,z^0\geq f^0(t,y,u),\,u\in U\},$$
for $(t,y)\in[0,T]\times\mathbb R^n$, and $\mathcal E(t,\mathcal O_\delta(x)):=\cup_{y\in\mathcal O_\delta(x)}\mathcal E(t,y)$.
Then, problem (P) has at least one optimal pair.

\end{proposition}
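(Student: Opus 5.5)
The plan is to verify the hypotheses of Berkovitz's existence theorem \cite[Theorem 4.2, p.58]{Berkovitz} for problem $(P)$, using Lemma~\ref{bxec} to supply the a priori compactness that theorem requires. Roughly, that theorem says the following. Suppose the admissible set is nonempty, all admissible trajectories lie in a fixed compact set, and the trajectories and integrands satisfy suitable equi-integrability. Suppose further that the sets $\mathcal E(t,x)$ satisfy the weak Cesari (upper semicontinuity) property. Then a minimizing sequence has a subsequence whose trajectories converge uniformly to an admissible trajectory attaining the infimum. The Cesari property is assumed in the statement, so the work is to check the remaining structural and compactness hypotheses.

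\textbf{Step 1: structural assumptions.} Under (A), $f$ and $f^0$ are measurable in $t$ and continuous in $(x,u)$ for a.e.\ $t$, and $h$ is $C^1$. Hence the data meet the Carath\'eodory-type regularity required in \cite{Berkovitz}, and $h$ is lower semicontinuous. The control constraint is the fixed set $U$, independent of $(t,x)$, and the endpoint constraint is $x(0)=x_0$ with $x(T)$ free. This terminal set is closed, as required.

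\textbf{Step 2: compactness and equi-integrability.} By Lemma~\ref{bxec}, every admissible trajectory satisfies $|x_u(t)|\le R(T)$. Using \eqref{bnf1} and the mean value theorem,
\[
|f(t,x,u)|\le K|x|+K_1\le KR(T)+K_1
\]
on this ball, so the derivatives $\dot x_u$ are uniformly bounded. Thus the trajectories form an equicontinuous, uniformly bounded family, and the velocities are equi-integrable. For the running cost, \eqref{bfz7} and \eqref{bnfz1} give $|f^0(t,x,u)|\le K_2+C^0_{R(T)}R(T)$ for $|x|\le R(T)$. So $f^0$ is bounded on the relevant region, hence bounded below by an $L^1$ function, and $J$ is bounded below; in particular the infimum is finite. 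The set $\mathcal P_{ad}$ is nonempty as soon as $\mathcal U\neq\emptyset$, because the bounds just derived make $J(u)<+\infty$ for every $u\in\mathcal U$.

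\textbf{Step 3: the limit argument.} With these hypotheses in place, \cite[Theorem 4.2]{Berkovitz} applies. For orientation, its mechanism is as follows. Take a minimizing sequence and extract a subsequence with $x_k\to x_*$ uniformly, which is possible by Lemma~\ref{bxec}. Pass to weak limits of $\dot x_k$ and of $f^0(\cdot,x_k,u_k)$. Use Mazur's lemma together with the weak Cesari property to show that $(\eta(t),\dot x_*(t))\in\mathcal E(t,x_*(t))$ a.e., where $\eta$ is the weak limit of the running costs. Finally, apply a measurable selection (Filippov) theorem to recover a control $\bar u$ with $\dot x_*=f(t,x_*,\bar u)$ and $f^0(t,x_*,\bar u)\le\eta$.

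The only genuinely delicate point is matching the exact hypotheses of \cite{Berkovitz}. That reference typically assumes either compactness of the trajectory region or a growth condition, and allows a possibly state-dependent control set. Lemma~\ref{bxec} together with the boundedness from Step~2 supplies the former, and our constant $U$ is a special case of the latter. I therefore expect the proposition to follow directly once the Cesari property is assumed.
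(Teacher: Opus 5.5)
Your proposal takes essentially the same route as the paper. The paper says only that the statement follows directly from \cite[Theorem 4.2, p.58]{Berkovitz} together with Lemma~\ref{bxec}, and you do the same: the bound $|x_u(t)|\le R(T)$ and the resulting uniform bounds on $f$ and $f^0$ supply Berkovitz's compactness and integrability hypotheses, while the weak Cesari property is assumed; your Steps 2--3 just write out details the paper leaves implicit.
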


\begin{remark}\label{rem:convexified_Cesari_failure}
Example \ref{ex31} cannot be obtained from the existence result based on the
weak Cesari property.  Indeed, the weak Cesari property fails not only for
the original finite control set \(U=\{1,2,3,4\}\), but also for its convex
hull \([1,4]\).

Let
\[
    \widetilde U=\{1,2,3,4\}
    \quad\text{or}\quad
    \widetilde U=[1,4].
\]
For \(t_0\in(0,1]\) and \(x=0\), set
\[
\mathcal E_{\widetilde U}(t_0,0)
=
\left\{
    (z^0,z_1,z_2)\in\mathbb R\times\mathbb R^2
    \mid
    z_1=u,\ z_2=u^2,\ z^0\geq Mt_0u,\ u\in\widetilde U
\right\}.
\]
The two points
\begin{align}\label{p1p3}
    P_1=(Mt_0,1,1),
    \qquad
    P_3=(3Mt_0,3,9)
\end{align}
belong to \(\mathcal E_{\widetilde U}(t_0,0)\).  However,
\[
    \frac{P_1+P_3}{2}=(2Mt_0,2,5).
\]
If this midpoint belonged to \(\mathcal E_{\widetilde U}(t_0,0)\), then
\(z_1=u\) would imply \(u=2\).  But then \(z_2=u^2=4\), which contradicts
\(z_2=5\).  Hence
\[
    \frac{P_1+P_3}{2}
    \notin
    \mathcal E_{\widetilde U}(t_0,0).
\]
Therefore \(\mathcal E_{\widetilde U}(t_0,0)\) is not convex.  Since
\[
    \bigcap_{\delta>0}
    \overline{\operatorname{co}}\,
    \mathcal E_{\widetilde U}(t_0,\mathcal O_\delta(0))
\]
is convex, the weak Cesari property cannot hold at \((t_0,0)\).

Consequently, the existence of an optimal control in Example \ref{ex31} is
not a consequence of the weak Cesari property, neither for the original
finite control set nor for its convexification.
\end{remark}

Next, we recall a result from \cite{PJ2009}. The following proposition, which combines \cite[Theorem 1.3]{PJ2009} with a conclusion from the proof of \cite[Proposition 3.1]{PJ2009}, investigates a special class of optimal control problems and provides sufficient conditions for the existence of optimal pairs.
\begin{proposition}\label{pj13}Let $n,s,m\in\mathbb{N}^+$, $T>0$,  and $U\subset\mathbb{R}^m$ be a compact set. Given maps $c_i: \mathbb{R}^n \to \mathbb{R}$, $\phi_i: \mathbb{R}^m \to \mathbb{R}$, $Q_i: \mathbb{R}^n \to \mathbb{R}^n$ (with $i=1,\cdots,s$) such that $c_i$ are continuous, $\phi_i$ are of class $C^1$ and $Q_i$ are Lipschitz, consider the following optimal control problem:

\[
(P_2) \quad
\begin{cases}
\text{Minimize } & \displaystyle \int_0^T \left[ \sum_{i=1}^s c_i(x(t)) \phi_i(u(t)) \right] dt \\
\text{subject to} & \dot{x}(t) = \sum_{i=1}^s Q_i(x(t)) \phi_i(u(t))\quad \text{a.e.  in } (0,T), \\
& x(0) = x_0 \in \mathbb{R}^n,\quad u \in L^\infty(0,T;\mathbb R^m), \\
 & \quad u(t) \in U\;a.e.\,t\in(0,T).
\end{cases}
\]

Set $c=(c_1,\cdots,c_s)$, $\phi=(\phi_1,\cdots,\phi_s)^\top$, and $Q=(Q_1, \cdots, Q_s)$.  Assume for each $x\in\mathbb{R}^n$, and $\xi\in Q(x)\overline{\mathrm{co}}\,\phi(U)$, the minimum
\begin{align}\label{th13}
    \min_{\substack{y\in \overline{\mathrm{co}}\,\phi(U) \\ \xi=Q(x)y}} c(x) y
\end{align}
is attained only in $\phi(U)$. Then, the problem $(P_2)$ admits an optimal pair.
\end{proposition}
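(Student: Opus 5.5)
This statement is attributed to \cite[Theorem 1.3]{PJ2009}, combined with a conclusion from the proof of \cite[Proposition 3.1]{PJ2009}, so the natural route is the relaxation argument. First, I would introduce the relaxed problem $(RP_2)$ in which the control $u(t)\in U$ is replaced by $y(t)\in \overline{\mathrm{co}}\,\phi(U)$. This gives the system $\dot x=Q(x)y$ and the cost $\int_0^T c(x(t))y(t)\,dt$. Since $U$ is compact and $\phi$ is continuous, $\phi(U)$ is compact, so $K:=\overline{\mathrm{co}}\,\phi(U)$ is a compact convex subset of $\mathbb R^s$.

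Next, I would prove that $(RP_2)$ has a minimizer by the direct method.
\begin{itemize}
\item Take a minimizing sequence $(y_k,x_k)$. Because $Q$ is Lipschitz and $K$ is bounded, Gronwall's inequality gives uniform bounds on $x_k$ and $\dot x_k$.
\item By Arzel\`a--Ascoli, $x_k\to \bar x$ uniformly along a subsequence, and $y_k\rightharpoonup \bar y$ weakly-$*$ in $L^\infty$. Since $K$ is closed and convex, $\bar y(t)\in K$ a.e.
\item The dynamics and the cost are linear in $y$ with coefficients $Q(x_k)\to Q(\bar x)$ and $c(x_k)\to c(\bar x)$ uniformly. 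Hence we can pass to the limit: $\dot{\bar x}=Q(\bar x)\bar y$, and the cost converges.
\end{itemize}
So $(\bar y,\bar x)$ is relaxed-optimal.

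Then I would show that $\bar y(t)\in\phi(U)$ a.e. Fix $t$ and set $\xi(t)=Q(\bar x(t))\bar y(t)$. I expect this to be the main obstacle, since the optimality of $\bar y$ is global in time, while the hypothesis \eqref{th13} concerns a pointwise minimization with the velocity $\xi$ held fixed. I would argue by a measurable replacement. The set-valued map $t\mapsto\operatorname{argmin}\{c(\bar x(t))y: y\in K,\ Q(\bar x(t))y=\xi(t)\}$ has nonempty compact values and is measurable. By a measurable selection theorem (Kuratowski--Ryll-Nardzewski / Filippov) it admits a measurable selection $\hat y(t)$. The pair $(\hat y,\bar x)$ is still admissible, because $Q(\bar x)\hat y=\dot{\bar x}$, and its cost is no larger. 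Hence $c(\bar x)\hat y=c(\bar x)\bar y$ a.e., which means $\bar y(t)$ is itself a pointwise minimizer for a.e. $t$. By the hypothesis, $\bar y(t)\in\phi(U)$ a.e.

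Finally, by Filippov's implicit function lemma, applied to the continuous map $\phi$ on the compact set $U$, there is a measurable $\bar u(t)\in U$ with $\phi(\bar u(t))=\bar y(t)$. Then $(\bar u,\bar x)$ is admissible for $(P_2)$. Its cost equals the relaxed infimum, which is at most $\inf(P_2)$ because $\phi(U)\subset K$. Therefore $(\bar u,\bar x)$ is optimal for $(P_2)$.
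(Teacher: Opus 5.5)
Your argument is correct. The paper gives no proof of its own for this proposition. It imports the result from \cite[Theorem 1.3]{PJ2009} together with a step from the proof of \cite[Proposition 3.1]{PJ2009}, which, as far as I recall, relaxes the problem with Young (parametrized) measures $\nu_t$ on $U$ and applies \eqref{th13} to their first moments $\int_U\phi\,d\nu_t\in\overline{\mathrm{co}}\,\phi(U)$.

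You avoid Young measures altogether. Cost and dynamics depend on $u$ only through $\phi(u)$, and linearly, so you relax directly to $y(t)\in\overline{\mathrm{co}}\,\phi(U)$ and get compactness from weak-$*$ compactness in $L^\infty$ plus Arzel\`a--Ascoli. This is more elementary and fully self-contained, and it shows the result needs nothing beyond two measurable-selection steps. The parametrized-measure framework of the source is what one needs when the control enters in a way not captured by finitely many linear moments.

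One simplification is available. Because the hypothesis says the minimum is attained \emph{only} in $\phi(U)$, any measurable selection $\hat y(t)$ of the argmin already lies in $\phi(U)$. Since $(\hat y,\bar x)$ is relaxed-optimal, you can apply Filippov's lemma to $\hat y$ directly and skip proving that $\bar y$ itself is a pointwise minimizer a.e.

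A few routine points should be stated explicitly:
\begin{itemize}
\item Redefine $\bar y$ on a null set so that the argmin map has nonempty values for every $t$.
\item That map is measurable, being the argmin of a Carath\'eodory integrand over a measurable compact-valued multifunction.
\item Solutions of $\dot x=Q(x)y$ are unique because $Q$ is Lipschitz, so $\bar x$ is indeed the trajectory generated by $\hat y$.
\item Every admissible pair of $(P_2)$ is relaxed-admissible via $y=\phi(u)$, which is measurable, so the relaxed minimum is at most $\inf(P_2)$.
\end{itemize}
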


\begin{remark}
We next explain why Example~\ref{ex31} cannot be covered by
Proposition~\ref{pj13}, even after the usual state augmentation.

Introduce the extended state
\[
X(t)=(X_0(t),X_1(t),X_2(t))
:=
(t,x_1(t),x_2(t)),\quad t\in [0,1],
\]
set
\[
\phi(v)=
\begin{pmatrix}
1&
\sin v&
v&
\cos v&
v^2
\end{pmatrix}^\top,
\qquad v\in U,
\]
and define
\[
Q(X)
=
\begin{pmatrix}
1 & 0 & 0 & 0 & 0\\
0 & X_1 & 1 & 0 & 0\\
0 & 0 & 0 & X_2 & 1
\end{pmatrix},\quad\forall\,X=(X_0,X_1,X_2)^\top\in\mathbb R^3.
\]
Then the state equation (\ref{eq:state_dependent_example}) becomes
\[
\left\{
\begin{array}{l}
\dot X(t)=Q(X(t))\phi(u(t)),\quad a.e.\,t\in(0,1),
\\ X(0)=(0,0,0).
\end{array}
\right.
\]

Hence the extended dynamics have the form required in Proposition~\ref{pj13}.

It remains to rewrite the terminal cost as an integral term. Along every admissible trajectory,
\[
\begin{aligned}
\frac{d}{dt}\bigl(\sin X_1(t)+\cos X_2(t)\bigr)
&=
\cos X_1(t)\dot X_1(t)
-
\sin X_2(t)\dot X_2(t)\\
&=
X_1(t)\cos X_1(t)\sin u(t)
+
\cos X_1(t)u(t)\\
&\quad
-
X_2(t)\sin X_2(t)\cos u(t)
-
\sin X_2(t)u(t)^2 .
\end{aligned}
\]
Since \(X_1(0)=X_2(0)=0\), we have
\[
\sin X_1(1)+\cos X_2(1)
=
1+
\int_0^1
\frac{d}{dt}\bigl(\sin X_1(t)+\cos X_2(t)\bigr)\,dt .
\]
Therefore the original cost functional differs by the constant \(1\) from
\[
\widetilde J(u(\cdot))
=
\int_0^1 c(X(t))\phi(u(t))\,dt,
\]
where
\[
c(X)
=
\bigl(
0,\,
X_1\cos X_1,\,
MX_0+\cos X_1,\,
-X_2\sin X_2,\,
-\sin X_2
\bigr).
\]
Thus the transformed problem has the structural form of \((P_2)\), up to the irrelevant
additive constant \(1\) in the value functional.

However, the convexity condition in Proposition~\ref{pj13} is still not satisfied. Indeed, take
\[
X=(0,0,0).
\]
Then
\[
Q(0)y
=
\begin{pmatrix}
y_0\\
y_2\\
y_4
\end{pmatrix},
\qquad
y=(y_0,y_1,y_2,y_3,y_4)\in \mathbb R^5.
\]
Let
\[
\bar y
=
\frac12\phi(1)+\frac12\phi(4)
\in \overline{\operatorname{co}}\phi(U).
\]
Set
\[
\xi:=Q(0)\bar y.
\]
Since
\[
Q(0)\phi(v)
=
\begin{pmatrix}
1\\
v\\
v^2
\end{pmatrix},
\]
we get
\[
\xi
=
\frac12
\begin{pmatrix}
1\\
1\\
1
\end{pmatrix}
+
\frac12
\begin{pmatrix}
1\\
4\\
16
\end{pmatrix}
=
\begin{pmatrix}
1\\[1mm]
\dfrac52\\[1mm]
\dfrac{17}{2}
\end{pmatrix}.
\]
Hence
\[
\xi\in Q(0)\overline{\operatorname{co}}\phi(U).
\]

We now claim that no point of \(\phi(U)\) belongs to the fiber
\[
\left\{
y\in \overline{\operatorname{co}}\phi(U)
\mid
Q(0)y=\xi
\right\}.
\]
Indeed, if \(Q(0)\phi(v)=\xi\) for some \(v\in U\), then
\[
\begin{pmatrix}
1\\
v\\
v^2
\end{pmatrix}
=
\begin{pmatrix}
1\\[1mm]
\dfrac52\\[1mm]
\dfrac{17}{2}
\end{pmatrix}.
\]
The second component gives \(v=5/2\), which is impossible because
\[
U=\{1,2,3,4\}.
\]
Therefore
\[
\phi(U)\cap
\left\{
y\in \overline{\operatorname{co}}\phi(U)
\mid
Q(0)y=\xi
\right\}
=
\emptyset .
\]

On the other hand, the fiber is nonempty because it contains \(\bar y\). Moreover, at \(X=0\),
\[
c(0)=(0,0,1,0,0),
\]
and therefore
\[
c(0)y=y_2.
\]
For every \(y\) in the above fiber, \(Q(0)y=\xi\) implies
\[
y_2=\frac52.
\]
Hence \(c(0)y\) is constant on the whole fiber. In particular, every feasible \(y\) in the
fiber is a minimizer of
\[
\min_{\substack{
y\in \overline{\operatorname{co}}\phi(U)\\
Q(0)y=\xi
}}
c(0)y.
\]
Since the fiber contains \(\bar y\notin\phi(U)\), the above minimum is not attained only in
\(\phi(U)\).

Consequently, even after transforming the original problem into the structural form of
Proposition~\ref{pj13}, condition~\eqref{th13} fails. Thus
Example~\ref{ex31} is not covered by Proposition~\ref{pj13}.

\end{remark}

\begin{remark}\label{rem:role_of_existence_theorem_in_example}
At first sight, one may expect the constant control $u(t)\equiv1$ to be
optimal, especially when $M$ is large, because the running cost
\begin{align*}
    \int_0^1 Mt\,u(t)\,dt
\end{align*}
penalizes larger values of $u$.  However, this is not the case.

Set
\begin{align*}
    \bar u(t)\equiv 1,
    \qquad t\in[0,1],
\end{align*}
and denote by $\bar x(\cdot)$ the corresponding state.  Then
\begin{align*}
    \dot{\bar x}_1(t)=\bar x_1(t)\sin 1+1,
    \qquad
    \dot{\bar x}_2(t)=\bar x_2(t)\cos 1+1,
    \qquad
    \bar x(0)=0.
\end{align*}
Hence
\begin{align}\label{eq:bar_x_explicit}
    \bar x_1(t)
    =
    \frac{e^{(\sin 1)t}-1}{\sin 1},
    \qquad
    \bar x_2(t)
    =
    \frac{e^{(\cos 1)t}-1}{\cos 1},
    \qquad t\in[0,1].
\end{align}
In particular,
\begin{align}\label{eq:bar_x_terminal_explicit}
    \bar x_1(1)
    =
    \frac{e^{\sin 1}-1}{\sin 1},
    \qquad
    \bar x_2(1)
    =
    \frac{e^{\cos 1}-1}{\cos 1}.
\end{align}

Let $0<\varepsilon<1$, and define
\begin{align*}
u_\varepsilon(t)
=
\begin{cases}
2, & t\in[0,\varepsilon],\\
1, & t\in(\varepsilon,1].
\end{cases}
\end{align*}
We now compute the corresponding state $x^{u_\varepsilon}(\cdot)$ explicitly.

For $t\in[0,\varepsilon]$, we have
\begin{align*}
    \dot x_1^{u_\varepsilon}(t)
    =
    x_1^{u_\varepsilon}(t)\sin 2+2,
    \qquad
    \dot x_2^{u_\varepsilon}(t)
    =
    x_2^{u_\varepsilon}(t)\cos 2+4,
    \qquad
    x^{u_\varepsilon}(0)=0.
\end{align*}
Therefore
\begin{align}\label{eq:x_eps_first_part}
    x_1^{u_\varepsilon}(t)
    =
    \frac{2\bigl(e^{(\sin 2)t}-1\bigr)}{\sin 2},
    \qquad
    x_2^{u_\varepsilon}(t)
    =
    \frac{4\bigl(e^{(\cos 2)t}-1\bigr)}{\cos 2},
    \qquad t\in[0,\varepsilon].
\end{align}
For $t\in[\varepsilon,1]$, the control becomes $1$.  Hence
\begin{align*}
    \dot x_1^{u_\varepsilon}(t)
    =
    x_1^{u_\varepsilon}(t)\sin 1+1,
    \qquad
    \dot x_2^{u_\varepsilon}(t)
    =
    x_2^{u_\varepsilon}(t)\cos 1+1.
\end{align*}
Using the values at $t=\varepsilon$ obtained from
\eqref{eq:x_eps_first_part}, the variation-of-constants formula gives
\begin{align}\label{eq:x_eps_second_part}
    x_1^{u_\varepsilon}(t)
    &=
    e^{(\sin 1)(t-\varepsilon)}
    \frac{2\bigl(e^{(\sin 2)\varepsilon}-1\bigr)}{\sin 2}
    +
    \frac{e^{(\sin 1)(t-\varepsilon)}-1}{\sin 1},
    \\
    x_2^{u_\varepsilon}(t)
    &=
    e^{(\cos 1)(t-\varepsilon)}
    \frac{4\bigl(e^{(\cos 2)\varepsilon}-1\bigr)}{\cos 2}
    +
    \frac{e^{(\cos 1)(t-\varepsilon)}-1}{\cos 1},
\end{align}
for $t\in[\varepsilon,1]$.

We next compute the difference between $x^{u_\varepsilon}(\cdot)$ and
$\bar x(\cdot)$.  For $t\in[0,\varepsilon]$, by
\eqref{eq:bar_x_explicit} and \eqref{eq:x_eps_first_part},
\begin{align}\label{eq:x_diff_first_part}
    x_1^{u_\varepsilon}(t)-\bar x_1(t)
    &=
    \frac{2\bigl(e^{(\sin 2)t}-1\bigr)}{\sin 2}
    -
    \frac{e^{(\sin 1)t}-1}{\sin 1},
    \\
    x_2^{u_\varepsilon}(t)-\bar x_2(t)
    &=
    \frac{4\bigl(e^{(\cos 2)t}-1\bigr)}{\cos 2}
    -
    \frac{e^{(\cos 1)t}-1}{\cos 1}.
\end{align}
For $t\in[\varepsilon,1]$, by \eqref{eq:bar_x_explicit} and
\eqref{eq:x_eps_second_part}, we have
\begin{align}\label{eq:x_diff_second_part}
    x_1^{u_\varepsilon}(t)-\bar x_1(t)
    &=
    e^{(\sin 1)t}
    \left[
    e^{-(\sin 1)\varepsilon}
    \frac{2\bigl(e^{(\sin 2)\varepsilon}-1\bigr)}{\sin 2}
    +
    \frac{e^{-(\sin 1)\varepsilon}-1}{\sin 1}
    \right],
    \\
    x_2^{u_\varepsilon}(t)-\bar x_2(t)
    &=
    e^{(\cos 1)t}
    \left[
    e^{-(\cos 1)\varepsilon}
    \frac{4\bigl(e^{(\cos 2)\varepsilon}-1\bigr)}{\cos 2}
    +
    \frac{e^{-(\cos 1)\varepsilon}-1}{\cos 1}
    \right].
\end{align}
In particular, at $t=1$,
\begin{align}\label{eq:Delta1_Delta2_def}
    x_1^{u_\varepsilon}(1)-\bar x_1(1)
    &=
    \Delta_1(\varepsilon),
    \\
    x_2^{u_\varepsilon}(1)-\bar x_2(1)
    &=
    \Delta_2(\varepsilon),
\end{align}
where
\begin{align}\label{eq:Delta_explicit}
    \Delta_1(\varepsilon)
    &=
    e^{\sin 1}
    \left[
    e^{-(\sin 1)\varepsilon}
    \frac{2\bigl(e^{(\sin 2)\varepsilon}-1\bigr)}{\sin 2}
    +
    \frac{e^{-(\sin 1)\varepsilon}-1}{\sin 1}
    \right],
    \\
    \Delta_2(\varepsilon)
    &=
    e^{\cos 1}
    \left[
    e^{-(\cos 1)\varepsilon}
    \frac{4\bigl(e^{(\cos 2)\varepsilon}-1\bigr)}{\cos 2}
    +
    \frac{e^{-(\cos 1)\varepsilon}-1}{\cos 1}
    \right].
\end{align}
By Taylor's expansion at $\varepsilon=0$, we obtain
\begin{align}\label{eq:Delta_asymptotic}
    \Delta_1(\varepsilon)
    =
    e^{\sin 1}\varepsilon+O(\varepsilon^2),
    \qquad
    \Delta_2(\varepsilon)
    =
    3e^{\cos 1}\varepsilon+O(\varepsilon^2).
\end{align}

Now we compute the difference of costs.  Since
$u_\varepsilon(t)-\bar u(t)=1$ on $[0,\varepsilon]$ and is equal to $0$
on $(\varepsilon,1]$, we have
\begin{align*}
    \int_0^1 Mt\,u_\varepsilon(t)\,dt
    -
    \int_0^1 Mt\,\bar u(t)\,dt
    =
    M\int_0^\varepsilon t\,dt
    =
    \frac{M}{2}\varepsilon^2.
\end{align*}
Therefore
\begin{align}\label{eq:J_diff_exact}
J(u_\varepsilon(\cdot))-J(\bar u(\cdot))
&=
\frac{M}{2}\varepsilon^2
+
\sin\bigl(\bar x_1(1)+\Delta_1(\varepsilon)\bigr)
-
\sin\bar x_1(1)
\notag\\
&\quad
+
\cos\bigl(\bar x_2(1)+\Delta_2(\varepsilon)\bigr)
-
\cos\bar x_2(1).
\end{align}
Using
\begin{align*}
    \sin(y+\eta)-\sin y
    =
    \eta\cos y+O(\eta^2),
    \qquad
    \cos(y+\eta)-\cos y
    =
    -\eta\sin y+O(\eta^2),
\end{align*}
together with \eqref{eq:Delta_asymptotic}, we get
\begin{align}\label{eq:J_diff_asymptotic}
J(u_\varepsilon(\cdot))-J(\bar u(\cdot))
&=
\left[
e^{\sin 1}\cos\bar x_1(1)
-
3e^{\cos 1}\sin\bar x_2(1)
\right]\varepsilon
\notag\\
&\quad
+
O(\varepsilon^2)
+
\frac{M}{2}\varepsilon^2.
\end{align}
By \eqref{eq:bar_x_terminal_explicit},
\begin{align*}
\bar x_1(1)
=
\frac{e^{\sin 1}-1}{\sin 1},
\qquad
\bar x_2(1)
=
\frac{e^{\cos 1}-1}{\cos 1}.
\end{align*}
A direct numerical computation gives
\begin{align*}
e^{\sin 1}\cos\bar x_1(1)
-
3e^{\cos 1}\sin\bar x_2(1)
\approx
-4.9907<0.
\end{align*}
Hence the negative linear term in \eqref{eq:J_diff_asymptotic} dominates
the quadratic terms for all sufficiently small $\varepsilon>0$.  Therefore
\begin{align*}
    J(u_\varepsilon(\cdot))<J(\bar u(\cdot))
\end{align*}
for all sufficiently small $\varepsilon>0$.  Thus the constant control
$u(t)\equiv1$ is not optimal.

\end{remark}

\begin{remark}\label{rem:PMP_candidates_not_finite}
Although the existence of an optimal control in Example
\ref{ex31} follows from Theorem \ref{peo}, the
Pontryagin maximum principle (PMP) itself does not determine the optimal control.
The reason is that the (PMP) gives only a switching rule, and the possible
switching times are not fixed in advance.

Indeed, let $u(\cdot)\in\mathcal U$ be a control satisfying the (PMP)
necessary condition, and let $x^u(\cdot)$ and $p^u(\cdot)$ be the
corresponding state and adjoint arcs.  Then, by the estimates obtained in
Step 2 of Example \ref{ex31}, we have
\begin{align}\label{eq:remark_estimates_xp}
& |x_1^u(t)|\leq 4(e^t-1),
\qquad
|x_2^u(t)|\leq 16(e^t-1),
\\
& |p_1^u(t)|\leq e^{1-t},
\qquad
|p_2^u(t)|\leq e^{1-t},
\\
& |\dot x_1^u(t)|\leq 4e^t,
\qquad
|\dot x_2^u(t)|\leq 16e^t,
\\
& |\dot p_1^u(t)|\leq e^{1-t},
\qquad
|\dot p_2^u(t)|\leq e^{1-t},
\end{align}
for every $t\in[0,1]$ in the first two lines and for a.e. $t\in[0,1]$ in
the last two lines.

For $v,w\in U$ with $v\neq w$, define
\begin{align*}
G_{vw}^u(t)
&:=
H(t,x^u(t),p^u(t),v)
-
H(t,x^u(t),p^u(t),w)
\\
&=
A_{vw}^u(t)-Mt(v-w),
\end{align*}
where
\begin{align*}
A_{vw}^u(t)
&=
p_1^u(t)x_1^u(t)(\sin v-\sin w)
+
p_1^u(t)(v-w)
\\
&\quad
+
p_2^u(t)x_2^u(t)(\cos v-\cos w)
+
p_2^u(t)(v^2-w^2).
\end{align*}
Using \eqref{eq:remark_estimates_xp}, exactly as in Step 4 of Example
\ref{ex31}, we obtain
\begin{align}\label{eq:remark_A_derivative_bound}
|\dot A_{vw}^u(t)|
\leq
(2e-1)
\bigl[
4(\sin 1-\sin 4)
+
16(\cos 1-\cos 4)
\bigr]
+
18
\end{align}
for a.e. $t\in[0,1]$ and for all $v,w\in U$ with $v\neq w$.  Hence, by
\eqref{lbm}, if $v>w$, then
\begin{align*}
\dot G_{vw}^u(t)
=
\dot A_{vw}^u(t)-M(v-w)
<0,
\qquad {\rm a.e.}\ t\in[0,1],
\end{align*}
while if $v<w$, then
\begin{align*}
\dot G_{vw}^u(t)
=
\dot A_{vw}^u(t)+M(w-v)
>0,
\qquad {\rm a.e.}\ t\in[0,1].
\end{align*}
Therefore, for every pair $v\neq w$, the function $G_{vw}^u$ is strictly
monotone on $[0,1]$. Consequently, the equation
\begin{align*}
G_{vw}^u(t)=0
\end{align*}
has at most one solution in $[0,1]$.  In other words,
\begin{align*}
\#\{t\in[0,1]\mid G_{vw}^u(t)=0\}\leq 1,
\end{align*}
where $\#E$ denotes the cardinality of the set $E$. Set
\begin{align*}
E_u
:=
\bigcup_{\substack{v,w\in U\\ v<w}}
\{t\in[0,1]\mid G_{vw}^u(t)=0\}.
\end{align*}
Since $U=\{1,2,3,4\}$, the set $E_u$ contains at most six points.

We now explain why possible switchings of a (PMP) candidate can occur only at
points of $E_u$.  Let $I$ be a connected component of $[0,1]\setminus E_u$.
Then, for every $v,w\in U$ with $v\neq w$, one has
\begin{align*}
G_{vw}^u(t)\neq 0,
\qquad t\in I.
\end{align*}
Since $G_{vw}^u$ is continuous, its sign cannot change on $I$.  Hence the
ordering of the four Hamiltonian branches
\begin{align*}
H(t,x^u(t),p^u(t),1),\quad
H(t,x^u(t),p^u(t),2),\quad
H(t,x^u(t),p^u(t),3),\quad
H(t,x^u(t),p^u(t),4)
\end{align*}
is fixed on $I$.  Therefore the maximizer of
$v\mapsto H(t,x^u(t),p^u(t),v)$ is a fixed element of $U$ on $I$.  By the
(PMP) maximum condition, $u(t)$ must be equal to this fixed element for a.e.
$t\in I$.

Thus, after modifying $u(\cdot)$ on a set of measure zero, every (PMP)
candidate is piecewise constant, and its possible switching times are
contained in the finite set $E_u$.  In particular, each (PMP) candidate has at
most six switching times.  However, the points of $E_u$ are not prescribed
in advance; they depend on $x^u(\cdot)$ and $p^u(\cdot)$, and hence on the
whole control $u(\cdot)$.  Consequently, the (PMP) yields only a finite
switching structure for each candidate, but it does not reduce the family of
(PMP) candidates to a finite set.
This is only an  a priori bound on the number of switching times of each
candidate.  It does not determine where these switching times are.  More
precisely, the equation
\begin{align*}
G_{vw}^u(t)=0
\end{align*}
depends on $x^u(\cdot)$ and $p^u(\cdot)$, and both of them depend on the
whole control $u(\cdot)$.  Hence the possible switching times are not
preassigned numbers depending only on the data of the problem.  They are
unknown continuous parameters coupled with the forward state equation and
the backward adjoint equation.

Therefore, even though the (PMP) implies that each candidate is piecewise
constant with at most six possible switching times, it does not reduce the
family of (PMP) candidates to a finite list of admissible controls.  In other
words, the (PMP) gives only a finite-switching structure for each candidate,
but not finitely many candidates.  This is why, in Example
\ref{ex31}, the existence of an optimal control
is obtained by the compactness argument in Proposition
\ref{scspro} and Theorem \ref{peo}, rather than by
enumerating all (PMP) candidates.

\end{remark}

\subsection{Finitely many (PMP) candidates do not guarantee existence of an optimal control}
Even when the (PMP) reduces the set of candidates to finitely many controls, the existence of an optimal control is still not guaranteed.
This shows that the (PMP) alone is not sufficient for existence theory.

\begin{example}\label{evpspmp}
Let \(T=1\) and
\[
    U=\{1,2,3,4\}.
\]
Define a map \(a:U\to\mathbb{R}\) by
\[
    a(1)=2,\qquad a(2)=1,\qquad a(3)=-1,\qquad a(4)=-2.
\]
Consider the scalar control system
\begin{align}\label{exau2}
\left\{
\begin{array}{l}
    \dot x(t)=a(u(t)),\quad a.e.\ t\in(0,1),
    \\[1mm]
    x(0)=0,
    \\[1mm]
    u(\cdot)\in\mathcal U,
\end{array}
\right.
\end{align}
where \(\mathcal U\) is defined by \eqref{mcs1}. Define the cost functional by
\[
    J(u(\cdot))
    =
    \int_0^1 x(t)^2\,dt
    +
    h(x(1)),
\]
where
\[
    h(x)=x^2(2-x),\quad x\in\mathbb{R}.
\]
We show that \(u(t)\equiv 1\) is the unique control satisfying the (PMP), but the optimal control does not exist.
\end{example}

\begin{proof}
The proof consists of five steps.

{\it Step 1.}
For this problem, the Hamiltonian function is
\[
    H(x,p,u)=p\,a(u)-x^2,
    \quad (x,p,u)\in\mathbb{R}\times \mathbb{R}\times U.
\]
Assume that \(\bar u(\cdot)\in\mathcal U\) satisfies the (PMP). Denote by \(x(\cdot)\) the corresponding solution to \eqref{exau2}.
The adjoint equation and the transversality condition are
\begin{align}\label{adoe1}
\left\{
\begin{array}{l}
    \dot p(t)=2x(t),\quad a.e.\,t\in(0,1),
    \\[1mm]
    p(1)=-4x(1)+3x(1)^2.
\end{array}
\right.
\end{align}
The (PMP) gives
\[
    \bar u(t)\in
    \operatorname*{arg\,max}_{v\in U}
    \{p(t)a(v)-x(t)^2\},
    \qquad a.e.\,t\in(0,1).
\]
Since the term \(-x(t)^2\) is independent of \(v\), this is equivalent to
\[
    \bar u(t)\in
    \operatorname*{arg\,max}_{v\in U} p(t)a(v),
    \qquad a.e.\,t\in(0,1).
\]
Thus, if \(p(t)>0\), then the unique maximizer is \(v=1\), because
\(a(1)=2\) is the largest value of \(a(v)\). If \(p(t)<0\), then the unique
maximizer is \(v=4\), because \(a(4)=-2\) is the smallest value of \(a(v)\).
Hence
\begin{align}\label{upgp1}
\bar u(t)=
\left\{
\begin{array}{ll}
1, & \text{if }p(t)>0,
\\[1mm]
4, & \text{if }p(t)<0,
\end{array}
\right.
\end{align}
and the corresponding solution \(x(\cdot)\) satisfies
\begin{align}\label{dxtp}
\dot x(t)=
\left\{
\begin{array}{ll}
2, & \text{if }p(t)>0,
\\[1mm]
-2, & \text{if }p(t)<0.
\end{array}
\right.
\end{align}

{\it Step 2.} We claim that
\begin{align}\label{pt00}
    |\{t\in[0,1]\mid p(t)=0\}|=0.
\end{align}
Indeed, set
\[
    E:=\{t\in[0,1]\mid p(t)=0\}.
\]
Suppose, by contradiction, that \(|E|>0\). By applying the Lebesgue
differentiation theorem \cite[Theorem 3.21, p.98]{MR1681462} to the
indicator function of the set \(E\), almost every point \(t\in E\) is a
point of density of the set \(E\) \cite[p.177]{MR924157}, namely,
\begin{align*}
\lim_{\delta\to0^+}
\frac{|E\cap(t-\delta,t+\delta)|}{2\delta}=1.
\end{align*}
Since \(p(\cdot)\) is absolutely continuous, it is differentiable almost
everywhere. Thus, we may take \(t\in E\cap(0,1)\) such that \(t\) is a
point of density of \(E\) and \(p(\cdot)\) is differentiable at \(t\). Then
there exists a sequence
\(\{t_k\}_{k\geq1}\subseteq E\setminus\{t\}\) such that \(t_k\to t\) as
\(k\to+\infty\). Since \(p(t_k)=p(t)=0\), we have
\begin{align*}
\lim_{k\to+\infty}\frac{p(t_k)-p(t)}{t_k-t}=0.
\end{align*}
Since \(p(\cdot)\) is differentiable at \(t\), this implies that
\(\dot p(t)=0\). Since such points \(t\in E\) form a full measure subset of
\(E\), we have
\begin{align*}
\dot p(t)=0,\quad a.e.\,t\in E.
\end{align*}

On the other hand, the adjoint equation \eqref{adoe1} gives
\[
    \dot p(t)=2x(t),
    \qquad \text{for a.e. }t\in[0,1].
\]
Hence
\[
    x(t)=0,
    \qquad \text{for a.e. }t\in E.
\]
Set
\[
    E_1:=E\cap\{t\in[0,1]\mid x(t)=0\}.
\]
Then \(|E_1|=|E|>0\). By applying the Lebesgue differentiation theorem to
the indicator function of \(E_1\), almost every point \(\hat t\in E_1\) is
a point of density of \(E_1\). Since \(x(\cdot)\) is absolutely continuous,
it is differentiable almost everywhere. Thus, we may take
\(\hat t\in E_1\cap(0,1)\) such that \(\hat t\) is a point of density of
\(E_1\) and \(x(\cdot)\) is differentiable at \(\hat t\). Then there exists
a sequence \(\{\hat t_k\}_{k\geq1}\subseteq E_1\setminus\{\hat t\}\) such
that \(\hat t_k\to\hat t\) as \(k\to+\infty\). Since
\(x(\hat t_k)=x(\hat t)=0\), we have
\begin{align*}
\lim_{k\to+\infty}
\frac{x(\hat t_k)-x(\hat t)}{\hat t_k-\hat t}=0.
\end{align*}
Since \(x(\cdot)\) is differentiable at \(\hat t\), this implies that
\(\dot x(\hat t)=0\). Since such points \(\hat t\in E_1\) form a full
measure subset of \(E_1\), we have
\begin{align*}
\dot x(t)=0,\quad a.e.\,t\in E_1.
\end{align*}

However, the state equation gives
\[
    \dot x(t)=a(\bar u(t)),
    \qquad \text{for a.e. }t\in[0,1],
\]
where
\[
    a(\bar u(t))\in\{2,1,-1,-2\},
    \qquad a.e.\,t\in(0,1).
\]
Thus
\[
    |\dot x(t)|\geq1,
    \qquad \text{for a.e. }t\in[0,1],
\]
which contradicts \(\dot x(t)=0\) for a.e. \(t\in E_1\), since
\(|E_1|>0\). Therefore
\[
    |\{t\in[0,1]\mid p(t)=0\}|=0.
\]

{\it Step 3.}\; We claim that \(p\) cannot change sign.

Indeed, it follows from \eqref{dxtp} and \eqref{pt00} that
\begin{align}\label{dxdp1}
\left\{
\begin{array}{l}
\dot x(t)=2\operatorname{sgn}p(t),\quad a.e.\,t\in(0,1),
\\[1mm]
\dot p(t)=2x(t),\quad a.e.\,t\in(0,1).
\end{array}
\right.
\end{align}
Since \(p(\cdot)\) is absolutely continuous and
\(|\{t\in[0,1]\mid p(t)=0\}|=0\), we have
\[
    \frac{d}{dt}|p(t)|=\dot p(t)\operatorname{sgn}p(t),
    \qquad a.e.\,t\in(0,1).
\]
Thus,
\begin{align*}
\frac{d}{dt}\left(x(t)^2-2|p(t)|\right)
&=2x(t)\dot x(t)-2\dot p(t)\operatorname{sgn}p(t)
\\
&=4x(t)\operatorname{sgn}p(t)-4x(t)\operatorname{sgn}p(t)
\\
&=0,
\quad a.e.\,t\in(0,1).
\end{align*}
Since \(x(\cdot)^2-2|p(\cdot)|\) is absolutely continuous, we have
\begin{align*}
x(t)^2-2|p(t)|
&=x(0)^2-2|p(0)|
=-2|p(0)|,
\quad \forall\,t\in[0,1].
\end{align*}
Equivalently,
\begin{align}\label{pxidentity}
    |p(t)|=\frac12 x(t)^2+|p(0)|,
    \quad \forall\,t\in[0,1].
\end{align}

If \(p(0)\neq0\), then \eqref{pxidentity} gives
\[
    |p(t)|\geq |p(0)|>0,
    \qquad \forall\,t\in[0,1].
\]
Together with the continuity of \(p(\cdot)\), this implies that \(p(\cdot)\)
cannot change sign on \((0,1)\).

It remains to consider the case \(p(0)=0\). In this case, \eqref{pxidentity}
becomes
\[
    |p(t)|=\frac12 x(t)^2,
    \qquad t\in[0,1].
\]
In particular, if \(p(\tau)=0\), then \(x(\tau)=0\). Moreover, since
\(x(\cdot)\) is continuous and \(\dot p(t)=2x(t)\) for a.e. \(t\in(0,1)\),
we have \(p\in C^1([0,1])\) and
\[
    \dot p(\tau)=2x(\tau),
    \qquad \tau\in[0,1].
\]

By contradiction, suppose that \(p(\cdot)\) changes sign on \((0,1)\). Since
\(p(0)=0\), there exists a connected component \(I=(\alpha,\beta)\) of either
\[
    \{t\in(0,1)\mid p(t)>0\}
    \quad\text{or}\quad
    \{t\in(0,1)\mid p(t)<0\}
\]
such that
\[
    0\leq \alpha<\beta<1,
    \qquad p(\alpha)=p(\beta)=0,
\]
where, if \(\alpha=0\), the equality \(p(\alpha)=0\) means \(p(0)=0\).

If \(p(t)>0\) on \(I\), then by \eqref{dxdp1},
\[
    \ddot p(t)=4,
    \qquad a.e.\,t\in I.
\]
Hence \(p(\cdot)\) is strictly convex on \([\alpha,\beta]\). Since
\(p(\alpha)=p(\beta)=0\), the definition of strict convexity gives
\[
    p(t)<0,
    \qquad \forall\,t\in(\alpha,\beta),
\]
which contradicts \(p(t)>0\) on \(I\).

If \(p(t)<0\) on \(I\), then by \eqref{dxdp1},
\[
    \ddot p(t)=-4,
    \qquad a.e.\,t\in I.
\]
Hence \(-p(\cdot)\) is strictly convex on \([\alpha,\beta]\). Since
\(-p(\alpha)=-p(\beta)=0\), the definition of strict convexity gives
\[
    -p(t)<0,
    \qquad \forall\,t\in(\alpha,\beta),
\]
which contradicts \(p(t)<0\) on \(I\).

Therefore \(p(\cdot)\) cannot change sign on \((0,1)\). Combining this with
\eqref{pt00}, we conclude that either
\[
    p(t)>0,
    \qquad a.e.\,t\in(0,1),
\]
or
\[
    p(t)<0,
    \qquad a.e.\,t\in(0,1).
\]

{\it Step 4.}\; We claim that \(\bar u(t)\equiv 1\) for a.e. \(t\in[0,1]\).

We consider two cases. If \(p(t)>0\) a.e. on \((0,1)\), then \eqref{upgp1}
gives
\[
    \bar u(t)\equiv1,
    \quad a.e.\,t\in(0,1).
\]
Hence
\[
    x(t)=2t,
    \quad t\in[0,1].
\]
Then the adjoint equation \eqref{adoe1} gives
\[
\left\{
\begin{array}{l}
\dot p(t)=4t,
\quad a.e.\,t\in(0,1),
\\[1mm]
p(1)=4,
\end{array}
\right.
\]
and therefore
\[
    p(t)=2+2t^2>0,
    \qquad t\in[0,1].
\]
This gives one (PMP) candidate, namely
\[
    \bar u(t)\equiv1,
    \quad a.e.\,t\in(0,1).
\]

If \(p(t)<0\) a.e. on \((0,1)\), then \eqref{upgp1} gives
\[
    \bar u(t)\equiv4,
    \quad a.e.\,t\in(0,1).
\]
Hence
\[
    x(t)=-2t,
    \quad t\in[0,1].
\]
The adjoint equation \eqref{adoe1} gives
\[
\left\{
\begin{array}{l}
    \dot p(t)=-4t,
    \quad a.e.\,t\in(0,1),
    \\[1mm]
    p(1)=20,
\end{array}
\right.
\]
and therefore
\[
    p(t)=22-2t^2>0,
    \quad t\in[0,1],
\]
which contradicts the assumption \(p(t)<0\) a.e. on \((0,1)\). Hence no
(PMP) candidate with \(p<0\) exists. Thus, we have
proved the claim.

{\it Step 5.}\;
We now construct admissible controls with smaller cost.

Direct computations give
\[
    J(\bar u(\cdot))
    =
    \int_0^1 4t^2\,dt
    +
    h(2)
    =
    \frac43.
\]

For each \(N\in\mathbb{N}^+\), set
\[
u_N(t)=
\begin{cases}
2, &
t\in\left[\dfrac{j}{N},\dfrac{j}{N}+\dfrac{1}{2N}\right),
\\[2mm]
3, &
t\in\left[\dfrac{j}{N}+\dfrac{1}{2N},\dfrac{j+1}{N}\right),
\end{cases}
\]
for \(j=0,1,\ldots,N-1\), and define, for instance, \(u_N(1)=2\).
Then \(u_N(\cdot)\in\mathcal U\). Since
\[
    a(2)=1,
    \qquad a(3)=-1,
\]
the corresponding state \(x_N(\cdot)\) is given by
\[
x_N(t)=
\begin{cases}
t-\dfrac{j}{N},
&
t\in
\left[\dfrac{j}{N},\dfrac{j}{N}+\dfrac{1}{2N}\right),
\\[3mm]
\dfrac{j+1}{N}-t,
&
t\in
\left[\dfrac{j}{N}+\dfrac{1}{2N},\dfrac{j+1}{N}\right),
\end{cases}
\]
for \(j=0,1,\ldots,N-1\), and \(x_N(1)=0\). In particular,
\[
    h(x_N(1))=h(0)=0.
\]
Moreover,
\[
\begin{aligned}
\int_{j/N}^{(j+1)/N} x_N(t)^2\,dt
&=
2\int_0^{1/(2N)} s^2\,ds
=
\frac{1}{12N^3}.
\end{aligned}
\]
Summing over \(j=0,1,\ldots,N-1\), we get
\[
    \int_0^1 x_N(t)^2\,dt
    =
    \frac{1}{12N^2}.
\]
Consequently,
\[
    J(u_N(\cdot))
    =
    \frac{1}{12N^2}.
\]
In particular,
\[
    J(u_N(\cdot))<\frac43=J(\bar u(\cdot))
\]
for every \(N\in\mathbb{N}^+\). Hence \(\bar u(\cdot)\) is not an optimal
control.

If an optimal control existed, then it would satisfy the (PMP). By Step 4, it would have to coincide with \(\bar u(\cdot)\).
However, \(\bar u(\cdot)\) is not optimal. This contradiction shows that
the optimal control does not exist. The proof is complete.
\end{proof}

\subsection{Non-necessity of the (PS)-type condition for existence}

We finally show that the \((PS)\)-type condition is not necessary for the
existence of optimal controls.
\begin{example}\label{edsps}
Let $T=1$,  $n\in\mathbb N^+$ be fixed, let
$
    U=\{1,2,3,4\}$,
and let  $\mathcal U$ be defined by (\ref{mcs1}).
We  equip
\(\mathcal U\) with the Ekeland metric
\[
    d(u,v)
    =
    \left|\{t\in[0,1]\mid u(t)\neq v(t)\}\right|.
\]
Consider the trivial control system
\[
\left\{
\begin{array}{l}
 \dot x(t)=0,\quad a.e.\ t\in(0,1),
 \\[1mm]
 x(0)=0,
\end{array}
\right.
\]
where \(x(t)\in\mathbb R^n\), and the cost functional
\[
    J(u(\cdot))\equiv0,
    \qquad u(\cdot)\in\mathcal U.
\]
Then every admissible control is optimal. We show that this problem does
not satisfy the (PS)-type condition.
\end{example}

\begin{proof}
For \(k\in\mathbb N^+\), set
\[
    A_k
    =
    \bigcup_{j=0}^{2^{k-1}-1}
    \left[
        \frac{2j+1}{2^k},
        \frac{2j+2}{2^k}
    \right).
\]
Thus \(A_k\) is obtained by dividing \([0,1]\) into \(2^k\) equal
subintervals and taking every other one. For instance,
\[
    A_1=\left[\frac12,1\right),
\]
\[
    A_2=
    \left[\frac14,\frac12\right)
    \cup
    \left[\frac34,1\right),
\]
and
\[
    A_3=
    \left[\frac18,\frac14\right)
    \cup
    \left[\frac38,\frac12\right)
    \cup
    \left[\frac58,\frac34\right)
    \cup
    \left[\frac78,1\right).
\]
Clearly,
\[
    |A_k|
    =
    2^{k-1}\cdot \frac1{2^k}
    =
    \frac12.
\]

Moreover, if \(k\neq l\), then
\[
    |A_k\triangle A_l|
    :=
    |(A_k\setminus A_l)\cup(A_l\setminus A_k)|
    =
    \frac12.
\]
Indeed, without loss of generality, assume that \(k<l\). For
\(j=0,1,\ldots,2^k-1\), set
\[
    I_j^{(k)}
    :=
    \left[\frac{j}{2^k},\frac{j+1}{2^k}\right).
\]
Then
\[
    [0,1)=\bigcup_{j=0}^{2^k-1} I_j^{(k)}
\]
is a disjoint union. By the definition of \(A_k\), we have
\[
    A_k\cap I_j^{(k)}
    =
    \begin{cases}
    \emptyset, & j \ \text{is even},\\[1mm]
    I_j^{(k)}, & j \ \text{is odd}.
    \end{cases}
\]
Since \(l>k\), each interval \(I_j^{(k)}\) is further divided into
\(2^{l-k}\) dyadic intervals of length \(2^{-l}\). More precisely,
\[
    I_j^{(k)}
    =
    \bigcup_{r=0}^{2^{l-k}-1}
    \left[
        \frac{j2^{l-k}+r}{2^l},
        \frac{j2^{l-k}+r+1}{2^l}
    \right).
\]
By the definition of \(A_l\), among these \(2^{l-k}\) subintervals,
exactly one half are contained in \(A_l\), and the remaining one half are
contained in \([0,1)\setminus A_l\). Hence
\[
    |A_l\cap I_j^{(k)}|
    =
    \frac12 |I_j^{(k)}|
    =
    \frac{1}{2^{k+1}}.
\]

We now compute the symmetric difference on each \(I_j^{(k)}\). If \(j\) is
even, then \(A_k\cap I_j^{(k)}=\emptyset\). Therefore
\[
\begin{aligned}
(A_k\triangle A_l)\cap I_j^{(k)}
&=
\left((A_k\setminus A_l)\cup(A_l\setminus A_k)\right)
\cap I_j^{(k)}
\\
&=
(A_l\setminus A_k)\cap I_j^{(k)}
\\
&=
A_l\cap I_j^{(k)}.
\end{aligned}
\]
Thus
\[
    |(A_k\triangle A_l)\cap I_j^{(k)}|
    =
    |A_l\cap I_j^{(k)}|
    =
    \frac12 |I_j^{(k)}|.
\]

If \(j\) is odd, then \(A_k\cap I_j^{(k)}=I_j^{(k)}\). Therefore
\[
\begin{aligned}
(A_k\triangle A_l)\cap I_j^{(k)}
&=
\left((A_k\setminus A_l)\cup(A_l\setminus A_k)\right)
\cap I_j^{(k)}
\\
&=
(A_k\setminus A_l)\cap I_j^{(k)}
\\
&=
I_j^{(k)}\setminus A_l.
\end{aligned}
\]
Thus
\[
    |(A_k\triangle A_l)\cap I_j^{(k)}|
    =
    |I_j^{(k)}|-|A_l\cap I_j^{(k)}|
    =
    \frac12 |I_j^{(k)}|.
\]
Consequently, for every \(j=0,1,\ldots,2^k-1\),
\[
    |(A_k\triangle A_l)\cap I_j^{(k)}|
    =
    \frac12 |I_j^{(k)}|
    =
    \frac{1}{2^{k+1}}.
\]
Summing over \(j=0,1,\ldots,2^k-1\), we get
\[
\begin{aligned}
    |A_k\triangle A_l|
    &=
    \sum_{j=0}^{2^k-1}
    |(A_k\triangle A_l)\cap I_j^{(k)}|
    \\
    &=
    \sum_{j=0}^{2^k-1}
    \frac{1}{2^{k+1}}
    =
    2^k\cdot \frac{1}{2^{k+1}}
    =
    \frac12.
\end{aligned}
\]

Now define
\[
u_k(t)=
\begin{cases}
1, & t\in A_k,\\
2, & t\in[0,1]\setminus A_k.
\end{cases}
\]
Then \(u_k(\cdot)\in\mathcal U\), and
\[
    J(u_k(\cdot))=0=\inf_{u(\cdot)\in \mathcal U}J(u(\cdot)).
\]
Moreover, in this trivial example we have
\[
    f(t,x,u)\equiv0,\qquad f^0(t,x,u)\equiv0,\qquad h(x)\equiv0,
\]
 and then the
Hamiltonian function is identically zero.
Therefore the approximate maximum condition in the (PS)-type condition,
see \eqref{ps3}, is automatically satisfied, for instance with \(c_k=0\).

On the other hand, for \(k\neq l\),
\[
    d(u_k,u_l)
    =|\{t\mid t\in (A_k\setminus A_l)\cup(A_l\setminus A_k)\}|
  =  |A_k\triangle A_l|
    =
    \frac12.
\]
Thus \(\{u_k(\cdot)\}\) has no Cauchy subsequence, and hence no convergent
subsequence in \((\mathcal U,d)\). Therefore the (PS)-type condition fails,
although optimal controls exist.
\end{proof}

\setcounter{equation}{0}

\section{Proofs of Theorem \ref{peo} and Proposition \ref{scspro}}\label{pmr}
\def\theequation{4.\arabic{equation}}
\hskip\parindent

\begin{proof}[Proof of Theorem \ref{peo}]

The proof is composed of four steps.

{\it Step 1.}\;
Set $C= \inf_{u(\cdot)\in\mathcal U}J(u(\cdot))$. Then,   for each $k\in\mathbb N^+$, there exists $\tilde u_k(\cdot)\in \mathcal U$ such that
\begin{align}\label{etuk}
 C\leq J(\tilde u_k(\cdot))\leq C+ \frac{1}{k}.
\end{align}
By \cite[Proposition 3.10, p.145]{ly}, the metric space $(\mathcal U,d)$ is complete. Moreover, by \cite[Lemma 4.1, p.151]{ly} and assumption (A), the map $u(\cdot)\mapsto J(u(\cdot))$ is continuous with respect to the metric $d$.
Hence, by Ekeland's variational principle, there exists $u_k(\cdot)\in\mathcal U$ such that
\begin{align}
&C\leq J(u_k(\cdot))\leq J(\tilde u_k(\cdot))\leq C+ \frac{1}{k}, \label{CJtildeJ}
\\\label{evp}& J(u(\cdot))+\frac{1}{k} d(u(\cdot), u_k(\cdot))\geq J(u_k(\cdot)),\quad \forall u(\cdot)\in\mathcal U.
\end{align}
Denote by $x_k(\cdot)$ the solution to system (\ref{cs}) corresponding to the control $u_k(\cdot)$.
According to Lemma \ref{bxec}, there exists a positive constant $\hat C$ such that
\begin{align}\label{bxk1}
|x_k(t)|\leq \hat C(1+|x_0|),\quad\forall\,t\in[0,T],\,\forall\,k\in\mathbb N^+.
\end{align}

{\it Step 2.}
Fix $k\in\mathbb N^+$.
Take any $u(\cdot)\in\mathcal U$.  From  \cite[Corollary 3.8, p.144]{ly}, we obtain that, for any $\rho\in(0,1)$, there exists a measurable subset $E_k^\rho\subset[0,T]$ such that
\begin{align}\label{lwd1}
&|E_k^\rho|=\rho T,
\\\label{lwd2}& \rho\int_0^t\left(\begin{array}{c}
f^0(s,x_k(s),u(s))-f^0(s,x_k(s),u_k(s))
\\  f(s,x_k(s),u(s))-f(s,x_k(s),u_k(s))
\end{array}
\right)ds\nonumber
\\&=\int_{(0,t)\cap E_k^\rho}\left(\begin{array}{c}
f^0(s,x_k(s),u(s))-f^0(s,x_k(s),u_k(s))
\\  f(s,x_k(s),u(s))-f(s,x_k(s),u_k(s))
\end{array}
\right)ds\nonumber
\\&\quad +\left(
\begin{array}{c}
r_{k,\rho}^0(t)
\\ r_{k,\rho}(t)
\end{array}
\right),\;\forall\,t\in[0,T],
\end{align}
where $|r_{k,\rho}^0(t)|+|r_{k,\rho}(t)|\leq \rho^2$ holds for all $t\in[0,T]$.
 Set
\begin{align}
\label{defukrho}
u_k^\rho(t)=u_k(t) I_{(E_k^\rho)^c}(t)+u(t)I_{E_k^\rho}(t),\quad\forall\,t\in[0,T],
\end{align}
where $I_S(\cdot)$ and $S^c$ denote the indicator function and the complement of a subset $S$, respectively. Let $x_k^\rho(\cdot)$ denote the solution to   system (\ref{cs}) corresponding to the control $u_k^\rho(\cdot)$.

We claim that
\begin{align}\label{fak}\begin{array}{l}
\sup_{t\in[0,T]}|x_k^\rho(t)-x_k(t)-\rho X_k(t)|=o(\rho),
\\[2mm]\left| \int_0^T f^0\left( t, x_k^\rho(t), u_k^\rho(t)\right)dt-\int_0^T f^0(t, x_k(t),u_k(t))d t-\rho Y_k^0\right|=o(\rho),
\end{array}
\end{align}
where $o(\rho)$ is a higher-order infinitesimal with respect to $\rho$,  $X_k(\cdot)$ is the solution to
\begin{align}\label{vek}
\left\{\begin{array}{l}
\dot X_k(t)=\nabla_x f(t, x_k(t),u_k(t)) X_k(t)+f(t, x_k(t),u(t))
\\ \quad\quad\;\;\;
\quad-f(t, x_k(t),u_k(t)), \quad\quad\quad\quad\quad\;a.e.\,t\in(0,T),
\\ X_k(0)=0,
\end{array}
\right.
\end{align}
and
\begin{align}
\label{defYk0}
Y_k^0=\int_0^T\left[ \nabla_x f^0(t,x_k(t),u_k(t))X_k(t)+f^0(t,x_k(t),u(t))-f^0(t,x_k(t),u_k(t))
\right]d t.
\end{align}

In fact,  the first relation of (\ref{fak}) follows directly from  \cite[Lemma 4.3, p.152]{ly}.
Now, we show that the second relation of (\ref{fak}) holds.
 By   integration by parts, the first relation of  (\ref{fak}), (\ref{bnfz1}), (\ref{lwd1}), (\ref{lwd2}) and Lemma \ref{bxec},
 we can derive
\begin{align*}
&\int_{0}^{T} \left[ f^{0}\left(t, x_{k}^{\rho}(t), u_{k}^{\rho}(t)\right) - f^{0}\left(t, x_{k} (t), u_{k}(t)\right) \right] d t
\\ &= \int_{0}^{T} \left[ f^{0}\left(t, x_{k}^\rho(t), u_{k}^\rho(t)\right) - f^{0}\left(t, x_{k}^\rho(t), u_{k}(t)\right) \right] d t
 \\
&\quad + \int_{0}^{T} \left[ f^{0}\left(t, x_{k}^{\rho}(t), u_{k}   (t)\right) - f^{0}\left(t, x_{k}   (t), u_{k}   (t)\right) \right] d t
\\
&= \int_{E_k^\rho   } \left[ f^{0}\left(t, x_{k}^{\rho}(t), u   (t)\right) - f^{0}\left(t, x_{k}^{\rho}(t), u_{k}   (t)\right) \right] dt
\\
&\quad + \int_{0}^{T} \int_{0}^{1} \nabla_{x} f^{0}\left(t, x_{k}   (t) + \theta (x_{k}^{\rho}(t) - x_{k}   (t)), u_{k}   (t)\right)  (x_{k}^{\rho}(t) - x_{k}   (t)) \, d\theta \, d t \\
&= \int_{E_k^\rho   } \left[ f^{0}\left(t, x_{k}(t), u (t)\right) - f^{0}\left(t, x_{k}(t), u_k   (t)\right) \right] dt
\\
&\quad + \int_{0}^{T} \int_0^1\nabla_{x} f^{0}\left(t, x_{k}   (t)+\theta(x_k^\rho(t)-x_k(t)), u_{k}   (t)\right)  (x_{k}^{\rho}(t) - x_{k}   (t)-\rho X_k(t)) \,d\theta  dt
\\&\quad +\rho\int_0^T\int_0^1\nabla_{x} f^{0}\left(t, x_{k}   (t)+\theta(x_k^\rho(t)-x_k(t)), u_{k}   (t)\right)  X_k(t)d\theta dt
\\&\quad+\int_{E_k^\rho}\left( f^0(t,x_k^\rho(t),u(t))- f^0(t,x_k(t),u(t))\right)  d t
\\&\quad-\int_{E_k^\rho}\left( f^0(t,x_k^\rho(t),u_k(t))- f^0(t,x_k(t),u_k(t))\right)  d t
\\&=\rho Y_k^0+R^\rho,
\end{align*}
where
\begin{align*}
&R^\rho
\\&=\rho\int_0^T\int_0^1\left[
\nabla_x  f^0\left(t, x_k(t)+\theta (x_k^\rho(t)-x_k(t)),u_k(t) \right)-\nabla_x f^0\left(t, x_k(t),u_k(t) \right)
\right]
\\&\quad \cdot X_k (t) d\theta d t-r_{k,\rho}^0(T)+o(\rho)
\\&\quad+\int_{E_k^\rho}\int_0^1\nabla_x f^0\left(t, x_k(t)+\theta (x_k^\rho(t)-x_k(t)),u(t) \right) (x_k^\rho(t)-x_k(t))d\theta d t
\\&\quad-\int_{E_k^\rho}\int_0^1\nabla_x f^0\left(t, x_k(t)+\theta (x_k^\rho(t)-x_k(t)),u_k(t) \right) (x_k^\rho(t)-x_k(t))d\theta d t,
\end{align*}
and $r_{k,\rho}^0(T)$ is given by  (\ref{lwd2}).

For any $t\in[0,T]$, integrating both sides of equation \eqref{vek} over the interval $[0,t]$ and applying \eqref{bnf1}, Lemma \ref{bxec}, and Gronwall's inequality, we obtain that  there exists a positive  constant $C_T$ such that
\begin{align}\label{bXK}
|X_k(t)
|\leq C_T,\quad\forall\,t\in[0,T].
\end{align}

We obtain from Lemma \ref{bxec},  (\ref{bnfz1}), (\ref{bnfz2}), (\ref{bXK})  and the first relation of  (\ref{fak}) that $R^\rho=o(\rho)$,
which implies  the second relation of (\ref{fak}).

{\it Step 3.}
 It follows  from (\ref{evp}),  (\ref{fak}),  (\ref{bxk1}), and the fact that $h$ is $C^1$,  that
\begin{align}
&-\frac{d(u_k^\rho(\cdot), u_k(\cdot))}{k}\nonumber
\\&\leq J(u_k^\rho(\cdot))-J(u_k(\cdot))\nonumber
\\&=\rho Y_k^0+\int_0^1\frac{\partial}{\partial\theta}h\left(x_k(T)+\theta(x_k^\rho(T)-x_k(T))  \right)d\theta+o(\rho)\nonumber
\\&=\rho Y_k^0+\int_0^1\nabla h\left(x_k(T)+\theta(x_k^\rho(T)-x_k(T))   \right)(x_k^\rho(T)-x_k(T))d\theta+o(\rho)\nonumber
\\&=\rho Y_k^0+\nabla h(x_k(T))(x_k^\rho(T)-x_k(T))+o(\rho)\nonumber
\\&\quad+\int_0^1\left[  \nabla h\left( x_k(T)+\theta(x_k^\rho(T)-x_k(T))\right)-\nabla h(x_k(T)) \nonumber  \right](x_k^\rho(T)-x_k(T))d\theta\nonumber
\\&=\rho Y_k^0+\rho\nabla h(x_k(T)) X_k(T)+o(\rho).
\label{estdk}
\end{align}
By means of integration by parts over $[0,T]$, we can derive from (\ref{dek}), (\ref{lwd1}), \eqref{defukrho}, (\ref{vek}), \eqref{defYk0} and \eqref{estdk} that
\begin{align*}
-\frac{T}{k}&\leq Y_k^0+\nabla h(x_k(T))X_k(T)+o(1)
\\&=\int_0^T\left[\left( \dot p_k(t)+p_k(t)\nabla_x f(t,x_k(t),u_k(t)) \right) X_k(t)\right.
\\&\quad\left.+f^0(t,x_k(t),u(t))-f^0(t,x_k(t),u_k(t))\right]dt-p_k(T) X_k(T)+o(1)
\\&= p_k(T)X_k(T)+\int_0^T\Big[- p_k(t)\dot X_k(t)+p_k(t)\nabla_x f(t,x_k(t),u_k(t)) X_k(t)
\\&\quad +f^0(t,x_k(t),u(t))-f^0(t,x_k(t),u_k(t))\Big]d t-p_k(T) X_k(T)+o(1)
\\&=\int_0^T\Big[H(t, x_k(t),p_k(t),u_k(t))-H(t, x_k(t),p_k(t),u(t))\Big]d t+o(1).
\end{align*}
As $\rho$ approaches zero, we obtain
\begin{align}\label{psp1}
\int_0^T\Big[H(t, x_k(t),p_k(t),u_k(t))-H(t, x_k(t),p_k(t),u(t))\Big]d t\geq -T/k.
\end{align}
Note that $u(\cdot)\in\mathcal U$ is arbitrary, thus inequality (\ref{psp1}) holds for all $u(\cdot)\in \mathcal U$. Since $k\in \mathbb N^+$ is also arbitrary,  according to the   (PS)-type condition, there exists a subsequence of $\{u_k(\cdot)\}$ ( still denoted by $\{u_k(\cdot)\}$ for simplicity) converges in the metric space $(\mathcal U, d)$.
According to \cite[Proposition 3.10, p.145]{ly}, the metric space  $\{\mathcal U,d\}$ is complete. Hence,  there exists $\bar u(\cdot)\in\mathcal U$ such that $\lim_{k\to+\infty}d(u_k(\cdot), \bar u(\cdot))=0$.

{\it Step 4.}\;We claim that $(\bar u(\cdot), \bar x(\cdot))$ is an optimal pair, where $\bar x(\cdot)$ is the solution to (\ref{cs}) corresponding to the control $\bar u(\cdot)$.

 In fact, it follows from  \cite[Lemma 4.1, p.151]{ly}  that there exists a positive constant $\tilde C$ such that
 \begin{align}\label{cxk}
|x_k(t)-\bar x(t)|\leq\tilde Cd(u_k(\cdot),\bar u(\cdot)),\quad \forall\,t\in[0,T], \;k\in\mathbb N^+.
\end{align}
 Then, by means of the fundamental theorem of calculus, we can obtain
\begin{align*}
 &J(u_k(\cdot))-J(\bar u(\cdot))
 \\
& = \int_{0}^{T} \left[ f^{0}\left(t, x_{k}(t), u_{k}(t)\right) - f^{0}\left(t, \bar x(t),u_k(t)\right)\right.
\\
 & \quad \left. +\, f^{0}\left(t, \bar x(t), u_k(t)\right) - f^{0}\left(t,\bar x(t), \bar u(t)\right) \right] d t+h(x_k(T))-h(\bar x(T))
 \\&=\int_0^T\int_0^1\nabla_x f^0\left(t, \bar x(t)+\theta (x_k(t)-\bar x(t)), u_k(t)  \right) (x_k(t)-\bar x(t))d\theta dt
 \\&\quad +\int_0^T\left[f^0(t,0,u_k(t)) -f^0(t,0,\bar u(t))\right]d t
 \\&\quad+\int_0^T\left[f^0(t,\bar x(t),u_k(t)) -f^0(t,0,u_k(t))\right]d t
 \\&\quad-\int_0^T\left[f^0(t,\bar x(t) ,\bar u(t))-f^0(t,0,\bar u(t))\right]d t+h(x_k(T))-h(\bar x(T))
 \\&= \int_0^T\int_0^1\nabla_x f^0\left(t, \bar x(t)+\theta (x_k(t)-\bar x(t)), u_k(t)  \right) (x_k(t)-\bar x(t))d\theta d t
 \\&\quad+\int_{\{\tau\mid u_k(\tau)\not=\bar u(\tau)\}}\left[f^0(t,0,u_k(t)) -f^0(t,0,\bar u(t))\right]d t+h(x_k(T))-h(\bar x(T))
 \\&\quad+\int_{\{\tau\mid u_k(\tau)\not=\bar u(\tau)\}}\int_0^1\left[\nabla_x f^0(t,\theta \bar x(t), u_k(t))-\nabla_x f^0(t,\theta \bar x(t), \bar u(t))\right]\bar x(t) d\theta dt.
\end{align*}
Since (\ref{cxk}),   (\ref{bnfz1})  and (\ref{bfz7}) hold and  function    $h$ is assumed to be $C^1$, we obtain from Lemma \ref{bxec} and  \eqref{CJtildeJ}  that $J(\bar u(\cdot))=C$. Thus, $(\bar u(\cdot), \bar x(\cdot))$ is an optimal pair.
 The proof is complete.

\end{proof}

\begin{proof}[Proof of Lemma \ref{bxec}]

First, we prove the estimate for \(x_u(\cdot)\). Take any
\(u(\cdot)\in\mathcal U\). By integrating equation \eqref{cs} over
\([0,t]\), we obtain from assumption (A) that
\begin{align*}
|x_u(t)|
&\leq
|x_0|
+
\int_0^t
|f(s,x_u(s),u(s))-f(s,0,u(s))|\,ds
+
\int_0^t |f(s,0,u(s))|\,ds
\\
&\leq
|x_0|
+
K\int_0^t |x_u(s)|\,ds
+
K_1t .
\end{align*}
By Gronwall's inequality,
\[
    |x_u(t)|
    \leq
    |x_0|e^{Kt}
    +
    K_1\frac{e^{Kt}-1}{K}
    =
    R(t),
    \qquad t\in[0,T].
\]

Second, we prove the estimate for \(p_u(\cdot)\). In \eqref{dek_u}, make the
change of variables
\[
    \tau=T-t,
    \qquad
    \tilde p_u(\tau):=p_u(T-\tau).
\]
Then \(\tilde p_u\) satisfies
\begin{align*}
\left\{\begin{array}{ll}
\dot{\tilde p}_u(\tau)
&=
\tilde p_u(\tau)
\nabla_x f(T-\tau,x_u(T-\tau),u(T-\tau))
\\
&\quad
-
\nabla_x f^0(T-\tau,x_u(T-\tau),u(T-\tau)),
\quad a.e.\,\tau\in(0,T),
\\[1mm]
\tilde p_u(0)
&=
-\nabla h(x_u(T)).
\end{array}
\right.
\end{align*}
Hence, using \(|x_u(t)|\leq R(T)\), we get
\begin{align*}
|\tilde p_u(\tau)|
&\leq
\sup_{|x|\leq R(T)}|\nabla h(x)|
+
K\int_0^\tau |\tilde p_u(s)|\,ds
+
C_{R(T)}^0 \tau,
\qquad \tau\in[0,T].
\end{align*}
Applying Gronwall's inequality gives
\[
    |\tilde p_u(\tau)|
    \leq
    \sup_{|x|\leq R(T)}|\nabla h(x)| e^{K\tau}
    +
    C_{R(T)}^0\frac{e^{K\tau}-1}{K},
    \qquad  \tau\in[0,T].
\]
Returning to \(t=T-\tau\), we obtain
\[
    |p_u(t)|
    \leq
    \sup_{|x|\leq R(T)}|\nabla h(x)| e^{K(T-t)}
    +
    C_{R(T)}^0\frac{e^{K(T-t)}-1}{K}
    =
    P(t),
    \qquad t\in[0,T].
\]

Finally, we prove the relative compactness. For a.e. \(t\in[0,T]\),
\begin{align*}
|\dot x_u(t)|
&=
|f(t,x_u(t),u(t))|
\\
&\leq
|f(t,x_u(t),u(t))-f(t,0,u(t))|
+
|f(t,0,u(t))|
\\
&\leq
K|x_u(t)|+K_1
\\
&\leq
KR(t)+K_1.
\end{align*}
Similarly, from the adjoint equation,
\begin{align*}
|\dot p_u(t)|
&\leq
|p_u(t)|\,|\nabla_x f(t,x_u(t),u(t))|
+
|\nabla_x f^0(t,x_u(t),u(t))|
\\
&\leq
KP(t)+C_{R(T)}^0 .
\end{align*}
Therefore, for every \(0\leq s<t\leq T\),
\[
    |x_u(t)-x_u(s)|
    \leq
    \int_s^t \bigl(KR(\tau)+K_1\bigr)\,d\tau,
\]
and
\[
    |p_u(t)-p_u(s)|
    \leq
    \int_s^t \bigl(KP(\tau)+C_{R(T)}^0\bigr)\,d\tau.
\]
These estimates imply that both families
\[
    \{x_u(\cdot)\mid u(\cdot)\in\mathcal U\},
    \qquad
    \{p_u(\cdot)\mid u(\cdot)\in\mathcal U\}
\]
are equicontinuous. Together with the uniform bounds \eqref{bxu11}, the
Arzelà--Ascoli theorem implies the relative compactness of
\[
    \{(x_u(\cdot),p_u(\cdot))\mid u(\cdot)\in\mathcal U\}
\]
in
$
C([0,T];\mathbb R^n)\times C([0,T];(\mathbb R^n)^*)$.
The proof is complete.
\end{proof}

\begin{proof}[Proof of Proposition  \ref{scspro}]

Let \(\{u_k(\cdot)\}_{k\geq 1}\subset\mathcal U\) and
\(\{c_k\}_{k\geq 1}\subset[0,+\infty)\) satisfy (\ref{ps3}). Denote by
\(x_k(\cdot)\) the solution to (\ref{cs}) corresponding to the control
\(u_k(\cdot)\), and by \(p_k(\cdot)\) the solution to (\ref{dek}).

\emph{Step 1.}
According to Lemma \ref{bxec}, there exist a subsequence of
\(\{x_k(\cdot), p_k(\cdot)\}_{k\geq 1}\), still denoted by itself, and
\((x_*(\cdot), p_*(\cdot))\in C([0,T];\mathbb R^n)\times C([0,T];(\mathbb R^n)^*)\) such that
\begin{align}\label{xkx8}
    (x_k(\cdot),p_k(\cdot))\to (x_*(\cdot), p_*(\cdot))
    \quad\text{in }C([0,T];\mathbb R^n)\times C([0,T];(\mathbb R^n)^*).
\end{align}

We claim that
\begin{align}\label{hkv8}
    \max_{v\in U}\sup_{t\in[0,T]}
    \left|
    H(t,x_k(t),p_k(t),v)
    -
    H(t,x_*(t),p_*(t),v)
    \right|
    \to0,
    \qquad\text{as }k\to+\infty.
\end{align}

In fact, for each \(v\in U\), by the definition of the Hamiltonian, we have
\begin{align*}
&\quad
\left|
H(t,x_k(t),p_k(t),v)
-
H(t,x_*(t),p_*(t),v)
\right|
\\
&\leq
|p_k(t)-p_*(t)|\,|f(t,x_k(t),v)|
\\
&\quad
+
|p_*(t)|\,|f(t,x_k(t),v)-f(t,x_*(t),v)|
\\
&\quad
+
|f^0(t,x_k(t),v)-f^0(t,x_*(t),v)|.
\end{align*}
By (\ref{bnf1}) and  Lemma \ref{bxec}, the right-hand side is
bounded by
\begin{align*}
& |p_k(t)-p_*(t)|
    \left(K|x_k(t)|+K_1\right)
    +K |p_*(t)||x_k(t)-x_*(t)|
\\
&\quad
+
\int_0^1
\left|
\nabla_x f^0\left(t,x_*(t)
+\theta(x_k(t)-x_*(t)),v\right)
\right|\,d\theta\,
|x_k(t)-x_*(t)|.
\end{align*}
Therefore, by Lemma \ref{bxec}, (\ref{bnfz1}), and (\ref{xkx8}), we get
\[
    H(t,x_k(t),p_k(t),v)
    \to
    H(t,x_*(t),p_*(t),v)
\]
uniformly in \(t\in[0,T]\). Since \(U\) has finitely many elements, we derive
(\ref{hkv8}).

\emph{Step 2.}
We claim that, for any \(\varepsilon>0\), there exists
\(N_\varepsilon\in\mathbb N^+\) such that
\begin{align}\label{eukl}
    d(u_k(\cdot),u_l(\cdot))
    \leq
    \frac{c_k}{\varepsilon}
    +
    \frac{c_l}{\varepsilon}
    +
    \left|\{t\in[0,T]\mid \Delta_*(t)\leq 4\varepsilon\}\right|,
    \qquad
    \forall\, k,l\geq N_\varepsilon,
\end{align}
where $\Delta_*(\cdot)$ is defined by (\ref{deta8}).

For each \(k\in\mathbb N^+\), set
\[
    M_k(t):=
    \max\{H(t,x_k(t),p_k(t),v_i)\mid i=1,\ldots,m\},
    \qquad t\in[0,T].
\]
Since \(U\) is finite and each function
\[
    t\mapsto H(t,x_k(t),p_k(t),v_i)
\]
is measurable, \(M_k(\cdot)\) is measurable. Moreover, by choosing the first
maximizer in the finite set \(U=\{v_1,\ldots,v_m\}\), we obtain a measurable
function \(\widetilde u_k(\cdot)\in\mathcal U\) such that
\[
    M_k(t)=H(t,x_k(t),p_k(t),\widetilde u_k(t)),
    \qquad t\in[0,T].
\]
Indeed, define
\[
    A_1^k:=
    \{t\in[0,T]\mid H(t,x_k(t),p_k(t),v_1)=M_k(t)\},
\]
and, for \(i=2,\ldots,m\),
\[
    A_i^k:=
    \{t\in[0,T]\mid H(t,x_k(t),p_k(t),v_i)=M_k(t)\}
    \setminus
    \bigcup_{j=1}^{i-1}A_j^k.
\]
Then the sets \(A_i^k\) are measurable, pairwise disjoint, and cover
\([0,T]\). Define
\[
    \widetilde u_k(t):=v_i,
    \qquad t\in A_i^k.
\]
This gives the desired measurable maximizer.

Taking \(u(\cdot)=\widetilde u_k(\cdot)\) in (\ref{ps3}), we obtain
\[
    \int_0^T
    \left[
    M_k(t)-H(t,x_k(t),p_k(t),u_k(t))
    \right]\,dt
    \leq c_k .
\]
For fixed \(\varepsilon>0\), define the bad set
\[
    B_k^\varepsilon
    :=
    \left\{t\in[0,T]\mid
    M_k(t)-H(t,x_k(t),p_k(t),u_k(t))>\varepsilon
    \right\}.
\]
By Markov's inequality,
\begin{align}\label{Bkeps}
    |B_k^\varepsilon|
    \leq
    \frac{c_k}{\varepsilon}.
\end{align}

According to the assumption (H), for almost every \(t\in[0,T]\), the function
\[
    v\mapsto H(t,x_*(t),p_*(t),v),
    \qquad v\in U,
\]
has a unique maximizer. We denote it by \(m_*(t)\), which is measurable.

By (\ref{hkv8}), there exists \(N_\varepsilon\in\mathbb N^+\) such that
\[
    \left|
    H(t,x_k(t),p_k(t),v)
    -
    H(t,x_*(t),p_*(t),v)
    \right|
    <\varepsilon,
\]
for all \((t,v)\in[0,T]\times U\) and all \(k\geq N_\varepsilon\).

Take any
\[
    t\in
    (B_k^\varepsilon)^c
    \cap
    \{\tau\in[0,T]\mid \Delta_*(\tau)>4\varepsilon\},
\]
where \((B_k^\varepsilon)^c=[0,T]\setminus B_k^\varepsilon\). Then
\[
    H(t,x_*(t),p_*(t),m_*(t))
    -
    H(t,x_*(t),p_*(t),v)
    >4\varepsilon,
    \qquad
    \forall\,v\in U\setminus\{m_*(t)\}.
\]
Hence, for all \(k\geq N_\varepsilon\) and
\(v\in U\setminus\{m_*(t)\}\),
\begin{align*}
&\quad
H(t,x_k(t),p_k(t),m_*(t))
-
H(t,x_k(t),p_k(t),v)
\\
&=
H(t,x_k(t),p_k(t),m_*(t))
-
H(t,x_*(t),p_*(t),m_*(t))
\\
&\quad
+
H(t,x_*(t),p_*(t),m_*(t))
-
H(t,x_*(t),p_*(t),v)
\\
&\quad
+
H(t,x_*(t),p_*(t),v)
-
H(t,x_k(t),p_k(t),v)
\\
&>2\varepsilon.
\end{align*}
Thus, for \(k\geq N_\varepsilon\), \(m_*(t)\) is the unique maximizer of
\[
    v\mapsto H(t,x_k(t),p_k(t),v),
    \qquad v\in U,
\]
with a gap larger than \(2\varepsilon\).

If \(u_k(t)\neq m_*(t)\), then
\[
    M_k(t)-H(t,x_k(t),p_k(t),u_k(t))>2\varepsilon>\varepsilon,
\]
which contradicts \(t\notin B_k^\varepsilon\). Therefore,
\[
    u_k(t)=m_*(t)
\]
for every
\[
    t\in
    (B_k^\varepsilon)^c
    \cap
    \{\tau\in[0,T]\mid \Delta_*(\tau)>4\varepsilon\},
    \qquad k\geq N_\varepsilon .
\]

Consequently, for \(k,l\geq N_\varepsilon\),
\[
    \{t\in[0,T]\mid u_k(t)\neq u_l(t)\}
    \subset
    B_k^\varepsilon
    \cup
    B_l^\varepsilon
    \cup
    \{t\in[0,T]\mid \Delta_*(t)\leq4\varepsilon\}.
\]
Taking measures and using (\ref{Bkeps}), we obtain (\ref{eukl}).

\emph{Step 3.}
We show that \(\{u_k(\cdot)\}_{k\geq 1}\) is a Cauchy sequence in
\((\mathcal U,d)\).

To this end, take any \(\eta>0\). Since
\[
    \{t\in[0,T]\mid \Delta_*(t)\leq4\varepsilon\}
\]
decreases to
\[
    \{t\in[0,T]\mid \Delta_*(t)=0\}
\]
as \(\varepsilon\to0^+\), and since all these sets are contained in
\([0,T]\), the continuity from above of Lebesgue measure
\cite[Theorem 1.8, p.~25]{MR1681462} implies
\[
    \left|\{t\in[0,T]\mid\Delta_*(t)\leq4\varepsilon\}\right|
    \to
    \left|\{t\in[0,T]\mid\Delta_*(t)=0\}\right|,
    \qquad
    \text{as }\varepsilon\to0^+.
\]
By (\ref{D80}), we have
\[
    \lim_{\varepsilon\to0^+}
    \left|\{t\in[0,T]\mid\Delta_*(t)\leq4\varepsilon\}\right|
    =0.
\]
Therefore, there exists \(\varepsilon_\eta>0\) such that
\[
    \left|\{t\in[0,T]\mid\Delta_*(t)\leq4\varepsilon_\eta\}\right|
    <\frac{\eta}{3}.
\]
Since \(c_k\to0\), there exists
\(\widehat N_\eta>N_{\varepsilon_\eta}\) such that
\[
    \frac{c_k}{\varepsilon_\eta}<\frac{\eta}{3},
    \qquad
    \forall\, k\geq \widehat N_\eta .
\]
Thus, for all \(k,l\geq \widehat N_\eta\), by (\ref{eukl}) we have
\[
    d(u_k(\cdot),u_l(\cdot))
    \leq
    \frac{c_k}{\varepsilon_\eta}
    +
    \frac{c_l}{\varepsilon_\eta}
    +
    \left|\{t\in[0,T]\mid \Delta_*(t)\leq4\varepsilon_\eta\}\right|
    <\eta.
\]
Hence \(\{u_k(\cdot)\}_{k\geq 1}\) is a Cauchy sequence in the Ekeland
metric \(d\).

Since \((\mathcal U,d)\) is complete by \cite[Proposition 3.10, p.~145]{ly}, the above Cauchy subsequence converges in \((\mathcal U,d)\). This proves the (PS)-type  condition.
\end{proof}

\section*{Declaration of generative AI and AI-assisted technologies}

During the preparation of this work, the authors used DeepSeek, a generative AI tool, for LaTeX editing and for obtaining suggestions in the search for suitable examples. The final examples presented in the paper were selected, formulated, and verified independently by the authors. The use of this tool was limited to the auxiliary purposes described above. The authors take full responsibility for the final content of the paper.

\bibliographystyle{amsplain}
\providecommand{\bysame}{\leavevmode\hbox to3em{\hrulefill}\thinspace}
\providecommand{\MR}{\relax\ifhmode\unskip\space\fi MR }
\providecommand{\MRhref}[2]{%
  \href{http://www.ams.org/mathscinet-getitem?mr=#1}{#2}
}
\providecommand{\href}[2]{#2}

\end{document}